\definecolor{commentcolor}{RGB}{85,139,78}
\definecolor{stringcolor}{RGB}{206,145,108}
\definecolor{keywordcolor}{RGB}{34,34,250}
\definecolor{backcolor}{RGB}{220,220,220}
\newcommand{\emptyaccsupp}[1]{\BeginAccSupp{ActualText={}}#1\EndAccSupp{}}
\newtheorem{rremark}{Remark}[section]
\def\diam{\operatornamewithlimits{diam}}
\def\D{{\mathcal D}}
\def\E{{\mathcal E}}
\def\w{{\mathscr W}}
\def\f{{\mathscr F}}
\def\3bar{{|\!|\!|}}
\newtheorem{FD-algorithm}{5-Point Finite Difference Algorithm}[section]
\definecolor{mygray}{gray}{.9}
\definecolor{mypink}{rgb}{.99,.91,.95}
\definecolor{mycyan}{cmyk}{.3,0,0,0}
\newcommand{\vertiii}[1]{{\left\vert\kern-0.25ex\left\vert\kern-0.25ex\left\vert #1
    \right\vert\kern-0.25ex\right\vert\kern-0.25ex\right\vert}}
\begin{document}
\title{VPVnet: a velocity-pressure-vorticity neural network method for the Stokes' equations under reduced regularity}

\author[Y.~Liu and C.~Yang]{Yujie Liu\affil{1}
       and Chao Yang\affil{2}\comma\corrauth}
\address{\affilnum{1}\ Artificial Intelligence Research Center, Peng Cheng Laboratory, Shenzhen 518005, China. \\
         \affilnum{2}\ School of Mathematical Sciences, Peking University, Beijing 100871, China.}
\emails{{\tt liuyj02@pcl.ac.cn} (Y.~Liu), {\tt chao\_yang@pku.edu.cn} (C.~Yang)}


\begin{abstract}
We present VPVnet, a deep neural network method for the Stokes' equations under reduced regularity.
Different with recently proposed deep learning methods \cite{DGM_2020,raissi2019physics} which are based on the original form of PDEs, VPVnet uses the least square functional of the first-order velocity-pressure-vorticity (VPV) formulation (\cite{Bo-Nan_1990}) as loss functions.
As such, only first-order derivative is required in the loss functions, hence the method is applicable to a much larger class of problems, e.g. problems with non-smooth solutions.
Despite that several methods have been proposed recently to reduce the regularity requirement by transforming the original problem into a corresponding variational form,
while for the Stokes' equations,
the choice of approximating spaces for the velocity and the pressure has to satisfy the LBB condition additionally.
Here by making use of the VPV formulation, lower regularity requirement is achieved with no need for considering the LBB condition.
Convergence and error estimates have been established for the proposed method.
It is worth emphasizing that the VPVnet method is divergence-free and pressure-robust, while classical inf-sup stable mixed finite elements for the Stokes'
equations are not pressure-robust.
Various numerical experiments including 2D and 3D lid-driven cavity test cases are conducted to demonstrate its efficiency and accuracy.
\end{abstract}

\ams{76D07,	76M25, 65N12, 65N15, 35B45.}
\keywords{Stokes' equations, deep neural network method, {first-order} velocity-pressure-vorticity formulation.}

\maketitle

\section{Stokes' equations}
Recently, the deep neural network (DNN) methods have attracted remarkable attention in the field of computational fluid dynamics  \cite{Brunton_2019,NSFnetLiHuiKarniadakis2020,Thuerey_2020,WangZhangCai_2020}.
In contrast to classical methods such as finite element, finite difference, and finite volume, the DNN approach does not require a mesh topology and can achieve good accuracy even when the domain of interest is presented by scattered discrete points \cite{Mehrkanoon_2015}.
Moreover, it requires less number of parameters to achieve the same accuracy as with finite element method on the grid points\cite{LagarisLikas1998}.
And it can lessen or even overcome the curse of dimensionality for high-dimensional problems \cite{E_MLM_2021},
it also has great potential in nonlinear approximations \cite{raissi2019physics,DGM_2018}.
However, the performance of existing DNN methods may degrade for the Navier-Stokes equations with low regularity,
e.g. when sharp local gradients present in a broad computational domain \cite{Ranade_CMAME_2021,Lee_ERA_2021}.
Noting that most DNN methods such as the recently developed physics informed neural network (PINN) method \cite{NSFnetLiHuiKarniadakis2020} employs
the residual of equations as the loss function, which requires derivatives of variables that is only applicable to problems whose solutions are sufficiently smooth,
for example $(\bm{u}, p)$ at least in $[H^2(\Omega)]^2\times H^1(\Omega)$ for the Stokes equations,
and is not able to cope properly with problems that have nonsmooth solutions. As exactly solving the Stokes's equations with low regularity is the preliminary for many Navier-Stokes applications such as the lid driven cavity problems and other problems with singular sources and sharp interfaces \cite{ArbogastBrunson_2007,CodinaSoto_1997,Bo-Nan_1990}, etc.
In this paper, we propose a deep neural network method for the Stokes' equations with reduced regularity, i.e. $(\bm{u}, p) \in [H^1(\Omega)]^2\times H^1(\Omega)$.

For simplicity, consider the following Stokes' problem which seeks a velocity field $\bm{u}$ and a pressure unknown $p$ satisfying
\begin{equation}\label{EQ:Stokes}
\left \{\begin{split}
 -\nu\Delta\bm{u} +\nabla p &= \bm{f},\qquad {\rm in}\  \Omega,\\
\nabla \cdot \bm{u}&=0, \qquad {\rm in}\ \Omega,\\
\bm{u} &= \bm{g}, \qquad {\rm on}\ \Gamma,\\
\end{split}\right.
\end{equation}
where $\Omega$ is a bounded polygonal domain with boundary $\Gamma = \partial \Omega$ in $d$ ($d=2$ or $3$) dimension, $\nu>0$ is a constant viscosity parameter, $\bm{f} = (f^{(1)}, \cdots ,f^{(d)})$ represents external source, $\bm{g}$ is a given Dirichlet boundary condition, and the pressure $p$ is assumed to have mean value zero; i.e., $\int_{\Omega}p d\Omega = 0$.
Noting that the classical solution of \eqref{EQ:Stokes} satisfies $\bm{u}\in [C^2(\Omega)\cap C^0(\Omega)]^d$ and $p\in C^1(\Omega)$.

Various deep learning based methods have been proposed and {explored} recently for the simulation of partial differential equations,
such as the least square methods \cite{DissanayakePhanThien_1994,Caizhiqiang2020}, the deep Ritz methods \cite{E_DRM_2018}, the {physics-informed neural
network (PINN) methods} \cite{raissi2019physics,raissi2020HPMScience,NSFnetLiHuiKarniadakis2020} and the variational formulation based methods \cite{Kharazmi_VPINN_2019,Khodayi-MehrZavlano-VarNet2020} among many others \cite{Caiwei_2020,LiKovachki_2020,Iskhakov2020_1,Iskhakov2020_2,DGM_2018}.
Here we roughly classify  {these} methods according to their formulation of the loss functions{;} the reader is referred to \cite{BeckHutzenthalerJentzenKuckuck_2020,Khodayi-MehrZavlano-VarNet2020} and the references therein for a more detailed review.
The least square methods may trace back to the {1990s.}, Dissanayake and Phan-Thhien \cite{DissanayakePhanThien_1994} have proposed training neural networks via a least square functional that based on the original formulation of the PDEs. Thereafter the method have been further developed and it has been shown that the sampling points can be obtained by a random sampling \cite{JinchaoXu_2018}, which is beneficial for high dimensional problems. Despite these advantages, for a {second-order} PDE, the minimization of the loss function over admissible functions leads to a fourth-order PDE, which is a more difficult problem than the original one \cite{Caizhiqiang2020}. Moreover, the method is only applicable to problems whose solutions are sufficiently smooth, more precisely, at least in $H^2(\Omega)$.
Several methods have been recently proposed by transform{ing} the original PDEs to the corresponding weak form so as to lower the regularity requirement.
For example, the deep Ritz method \cite{E_DRM_2018} uses the energy functional of the underlying PDEs as loss functions,
hence only first-order spatial derivatives is required for the second{-}order elliptic PDEs.
However, the method only applies to problems that have a underlying minimization principle.
{In addition to that}, variational type neural network methods \cite{Kharazmi_VPINN_2019,Khodayi-MehrZavlano-VarNet2020} have been proposed.
These methods train the neural network by minimizing variational residuals of PDEs over a set of test functions with their compact supports located at different regions in space and time. The loss function is expected to be discretization free and highly parallelizable \cite{Khodayi-MehrZavlano-VarNet2020} and the methods also employ lower order derivatives.
For instance, numerical experiments have shown good performance of the variational type methods for advection-diffusion problems with high Peclet numbers.

As for the Stokes problems, which is the main concern of this paper, different deep learning methods based on the least square functional of the original PDEs have been proposed and verified in \cite{DGM_2020,Caiwei_2020,raissi2019physics}.
As mentioned previously, these methods are only applicable to problems whose solutions are sufficiently smooth, for $(\bm{u}, p)$ at least in $[H^2(\Omega)]^2\times H^1(\Omega)$.
Since the Stokes' problem does not have an underlying minimization principle,  {methods such as} the deep Ritz method  {do} not apply.
As for the variational type methods, it is well known that the variational formulation of the Stokes' problem leads to a saddle-point problem.
Consequently, the combination of velocity and pressure interpolations is required to satisfy the famous Ladyzhenskaya-Babu$\check{s}$ka-Brezzi (LBB) condition \cite{GiraultRaviart_1986,LiRuo_2019}.
On the other side, by using different activation functions such as Sigmoid, deep neural network can give rise to a very wide range of functional classes that can be drastically different from the piecewise polynomial function classes used in classic{al} finite element methods.
These function classes, however, do not usually form a linear vector space \cite{JinchaoXu_2020} which makes it tricky to apply the variational type neural network methods to  Stokes' equations.

In this paper, we propose a neural network method that {is} based on the first{-}order velocity-pressure-vorticity {(VPV)} system \cite{Bo-Nan_1990,Bo-Nan_1992,Bo-Nan_1994}, namely VPVnet, to approximate the solution of the Stokes' equations.
The least square functional of the VPV system leads to a minimization problem rather
than a saddle point problem, thus the combination of velocity, vorticity and pressure interpolations is not subject to the restriction of the LBB conditions.
{And} no artificial boundary conditions need to be devised for the vorticity.
Moreover, the VPV least square functional only requires first-order spatial derivatives, hence it has less regularity requirements on the solutions and is applicable to a much larger class of problems compared with least square methods that based on the original form of the Stokes' equations.
Note that similar ideas of making use of first order systems has also been used for the second-order elliptic PDEs {and verified with one dimensional test cases in \cite{Caizhiqiang2020}, but have not been studied for more complex systems such as the Stokes' equations.}
Wang \textit{et. al.} \cite{WangZhangCai_2020} developed a multi-scale deep neural network method for computing oscillatory Stokes flows in complex
domains by using first order systems.
We have established an error estimate for the proposed VPVnet method, which guarantees the convergence of the method for a small value of the loss function and vice versa, i.e. when the approximate solution is close to the exact solution, the loss function is of small value.
It's also worth emphasizing that the VPVnet method is divergence-free and pressure-robust, while classical inf-sup stable mixed finite elements for the Stokes'
equations are not pressure-robust.
We use deep neural network comprised of ResNet blocks to approximate the solutions of the Stokes' equations, which is shown to have stronger predictive ability than
traditional feedforward neural networks.
The loss function is computed using Gauss-Legendre quadrature rules and the optimization algorithm is composed of two steps: an Adam optimizer with self-adaptive learning rates
followed by a limited-memory BroydenFletcherGoldfarbShanno algorithm with bound constraints (L-BFGS-B) \cite{BFGS_1989} to finetune the results, which improves and accelerates the optimization process.
Furthermore, the approximate and expressive ability of the VPVnet method is illustrated with non-smooth test cases {in both 2D and 3D.}

The paper is organized as follows. In Section \ref{SectionVPV}, we present the fi{r}st order velocity-pressure-vorticity formulation for the Stokes' problem \eqref{EQ:Stokes} and its relative least square functionals. In Section \ref{SectionDLS}, we describe the structure of the VPV neural networks for approximating the solutions of the Stokes' problem. The convergence and error estimates of the method is established in Section \ref{Section:convergence}. In Section \ref{Section-numerical-experiments}, several numerical results will be presented to demonstrate the efficiency and accuracy of the method. Finally, conclusions and discussions are presented in Section \ref{SectionDisCon}.

Throughout this paper, we use the standard notation and definition for the Sobolev spaces $H^s(\Omega)$ and $H^s(\Gamma)$ with inner products and norms denoted by $(\cdot,\cdot)_{s,\Omega}$
and $(\cdot,\cdot)_{s,\Gamma}$ and $\|\cdot\|_{s,\Omega}$ and $\|\cdot\|_{s,\Gamma}$, respectively. When there is no confusion, $\Omega$ (and $\Gamma$) is often omit in the indices for simplicity.
$H^s_0(\Omega)$ is defined as
\begin{eqnarray*}
H^s_0(\Omega)=\{v\in H^s(\Omega): ~v|_{\partial\Omega}=0\},
\end{eqnarray*}
and  $L_0^2(\Omega)$ denotes the subspace of square integrable functions with zero mean.
We also set $\tilde{H}^s(\Omega) = H^s(\Omega)\cap L^2_0(\Omega)$.
And by $(\cdot,\cdot)_{(s_1, \cdots, s_n)}$ and $\|\cdot\|_{(s_1, \cdots, s_n)}$ we denote inner products and norms, respectively,
on the product space $H^{s_1}\times \cdots \times H^{s_n}$; when all $s_i$ are equal, we shall simply write $(\cdot,\cdot)_{s, \Omega}$ and $\|\cdot\|_{s, \Omega}$.
\section{Velocity-{p}ressure-{v}orticity formulation of Stokes' equations and least square functionals}\label{SectionVPV}
For the {sake} of clarity, we consider the VPV formulation in 2D in this section.
{The} {3D formulation} {can be found} in \cite{Gunzburger_1994,Bo-Nan_1994} and will not be detailed here, {while} {numerical experiments will be conducted in both 2D and 3D in Section \ref{Section-numerical-experiments}}.
By introducing the vorticity $w = \nabla \times \bm{u}$ as an auxiliary variable, and using the identity $\nabla\times(\nabla\times\bm{u})=-\Delta \bm{u} +\nabla (\nabla\cdot \bm{u})$, in addition with the incompressibility constraint $\nabla\cdot\bm{u}=0$,
the Stokes equation \eqref{EQ:Stokes} can be written as
\begin{equation}\label{EQ:Velocity-Pressure-Vorticity_formulation}
\left \{\begin{split}
 \nu\nabla \times w +\nabla p &= \bm{f},\qquad {\rm in}\  \Omega,\\
\nu( w-\nabla \times \bm{u}) &=0, \qquad {\rm in}\ \Omega,\\
\nabla \cdot \bm{u}&=0, \qquad {\rm in}\ \Omega,\\
\end{split}\right.
\end{equation}
or
\begin{equation}\label{EQ:Velocity-Pressure-Vorticity_formulation_2d}
\left \{\begin{split}
 p_x + \nu w_y - f^{(1)} &=0,\qquad {\rm in}\  \Omega,\\
 p_y - \nu w_x - f^{(2)} &=0,\qquad {\rm in}\  \Omega,\\
 \nu(w + u_y - v_x )&=0,\qquad {\rm in}\  \Omega,\\
 u_x + v_y &=0,\qquad {\rm in}\  \Omega,\\
\end{split}\right.
\end{equation}
with a velocity boundary condition
\begin{equation}\label{EQ:VPV_Vbc}
\bm{u} = \bm{g} \text{ on } \Gamma,
\end{equation}
and a constraint condition on the pressure
\begin{equation}\label{EQ:VPV_pbc}
\int_{\Omega}p d\Omega = 0.
\end{equation}
We have the following proposition \cite{Gunzburger_1994}:
\begin{proposition}
The boundary value problem \eqref{EQ:Velocity-Pressure-Vorticity_formulation} (or \eqref{EQ:Velocity-Pressure-Vorticity_formulation_2d}), \eqref{EQ:VPV_Vbc} and \eqref{EQ:VPV_pbc} is equivalent to the Stokes' problem \eqref{EQ:Stokes} in primitive variable form.
The problem has a unique solution for all smooth data $f^{(1)}$, $f^{(2)}$ and $\bm{g}$.
\end{proposition}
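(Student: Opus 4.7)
The proposition has two parts: equivalence between the VPV system and the primitive-variable Stokes problem, and well-posedness for smooth data. Both can be handled by unwinding the definition of the vorticity together with the vector identity cited right before the statement.

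My plan is to prove the two implications of the equivalence separately. Starting from a classical Stokes solution $(\bm{u},p)$, I would define $w := \nabla \times \bm{u}$ and verify each line of \eqref{EQ:Velocity-Pressure-Vorticity_formulation} in turn: the second equation holds by construction, the third is the incompressibility already assumed in \eqref{EQ:Stokes}, and the first is obtained by substituting the identity $\nabla\times(\nabla\times\bm{u}) = -\Delta\bm{u}+\nabla(\nabla\cdot\bm{u})$, whose second term vanishes by $\nabla\cdot\bm{u}=0$, into $-\nu\Delta\bm{u}+\nabla p=\bm{f}$. The boundary condition \eqref{EQ:VPV_Vbc} and the zero-mean constraint \eqref{EQ:VPV_pbc} are inherited verbatim.

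Conversely, starting from a solution $(\bm{u},w,p)$ of the VPV system, I would use the second equation to substitute $w=\nabla\times\bm{u}$ into the first equation, producing $\nu\nabla\times(\nabla\times\bm{u})+\nabla p=\bm{f}$. Combining with the third equation $\nabla\cdot\bm{u}=0$ and applying the same identity in reverse recovers $-\nu\Delta\bm{u}+\nabla p=\bm{f}$, i.e.\ the momentum equation of \eqref{EQ:Stokes}. The remaining Stokes equations and side conditions again carry over directly, completing the equivalence.

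For the second claim, I would appeal to the classical existence and uniqueness theory for the Stokes system with smooth data $\bm{f}$ and $\bm{g}$ (for instance \cite{GiraultRaviart_1986}, provided the usual compatibility $\int_\Gamma \bm{g}\cdot\bm{n}\,ds = 0$ is implicit in the notion of smooth data). Uniqueness of $(\bm{u},p)$ with $p\in L_0^2(\Omega)$ transfers to uniqueness of the VPV triple because $w$ is determined algebraically by $w=\nabla\times\bm{u}$, and existence is obtained by building the VPV triple from the Stokes solution as above. The only mild subtlety will be ensuring that smoothness of $\bm{f},\bm{g}$ is enough to make $w$ well defined pointwise in the sense assumed by \eqref{EQ:Velocity-Pressure-Vorticity_formulation}; this is immediate once $\bm{u}\in[C^2(\Omega)\cap C^0(\bar\Omega)]^d$, which is the regularity already recorded after \eqref{EQ:Stokes}. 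No step appears to be a substantive obstacle: the proof is essentially an application of a vector identity plus a citation of the classical Stokes theory.
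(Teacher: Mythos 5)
Your argument is correct and is essentially the standard one: the paper itself offers no proof of this proposition, deferring entirely to the cited reference \cite{Gunzburger_1994}, where the equivalence is established exactly as you describe (the vector identity $\nabla\times(\nabla\times\bm{u})=-\Delta\bm{u}+\nabla(\nabla\cdot\bm{u})$ combined with $\nabla\cdot\bm{u}=0$, plus classical Stokes well-posedness and the algebraic determination of $w$). Your remark that the compatibility condition $\int_\Gamma \bm{g}\cdot\bm{n}\,ds=0$ must be implicit in ``smooth data'' is a correct and worthwhile precision that the paper's statement glosses over.
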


Besides the boundary condition \eqref{EQ:VPV_Vbc}, various boundary conditions are possible for system \eqref{EQ:Velocity-Pressure-Vorticity_formulation},
and the solvability of the boundary-value problem depends on the combination of the boundary conditions \cite{Gunzburger_1994}.
More generally, let $U =(\bm{u},w, p)$, where $\bm{u}=(u,v)^T$, the VPV formulation of the Stokes' equations can be written in general form of a {first-order} system
\begin{eqnarray}
\mathscr{L} U &= &
\begin{pmatrix}
  0 & 0 & ~~\nu \frac{\partial}{\partial y} & \frac{\partial}{\partial x}\\
  0 & 0 & -\nu \frac{\partial}{\partial x} & \frac{\partial}{\partial y}\\
\nu \frac{\partial}{\partial y} & -\nu\frac{\partial}{\partial x} & ~~\nu~~~~ & 0\\
     ~~\frac{\partial}{\partial x} &   ~~\frac{\partial}{\partial y} & 0    & 0\\
\end{pmatrix}
\begin{pmatrix}  u \\v \\ w \\ p \\\end{pmatrix}
=
\begin{pmatrix}{f}^{(1)}\\ {f}^{(2)}\\ 0 \\ 0 \end{pmatrix}=F, \text{ in }\Omega, \label{Eq:Linear_system}\\
\mathscr{R} U  &= & G, \text{ on } \Gamma, \label{Eq:Linear_system_bdc}
\end{eqnarray}
where $\mathscr{L}$ is an elliptic operator in the sense of Douglis and
Nirenberg \cite{AgmonDouglisNirenberg_1964}, and $\mathscr{R}$ {represents} the boundary operators.
Following the study in \cite{Gunzburger_1994}, we consider two choices for the boundary operators.
The first one is the boundary condition \eqref{EQ:VPV_Vbc} that imposes the velocity on the boundary, which can be rewritten in the following form
\begin{eqnarray}
\mathscr{R}_1 U =
\begin{pmatrix}
u^0\\
v^0
\end{pmatrix}
\text{ on } \Gamma, \label{Eq:Linear_system_bdc1}
\end{eqnarray}
where $\bm{u}^0$ is a given function defined on $\Gamma$.
The second boundary operator impose{s} the pressure and the normal component of velocity,
\begin{eqnarray}
\mathscr{R}_1 U =
\begin{pmatrix}
\bm{u}_n^0\\
p^0
\end{pmatrix}
\text{ on } \Gamma, \label{Eq:Linear_system_bdc2}
\end{eqnarray}
where $\bm{u}_n$ represents the normal component of velocity, $\bm{u}_n^0$ and $p^0$ are given functions defined on $\Gamma$.
Recall that one also has to impose the additional constraint on the pressure \eqref{EQ:VPV_pbc} to guarantee the uniqueness of solutions.
\subsection{Least square functional}
We consider the least square functional for the VPV system \eqref{Eq:Linear_system}-\eqref{Eq:Linear_system_bdc} that is defined in terms of the norms indicated by the Agmon-Douglis-Nirenberg theory \cite{AgmonDouglisNirenberg_1964}.
The {reader} is referred to \cite{Bo-Nan_1990,Gunzburger_1994,BochevGunzburger_1998,DeangGunzburger_1998} for more details concerning its derivation, only the main results are recalled here.
Let $\mathscr{L}$ be characterized by two systems of integers $\{s_i\}= \{s_1, s_2, s_3, s_4\}$, $s_i\leq0$ and $\{t_j\}= \{t_1, t_2, t_3, t_4\}$, $t_j\geq0$, which are attached to the equations and the unknowns, respectively.
{Here variable $s_i$ corresponds} to the $i$th equation and $t_j$ to the $j$th dependent variable.
The least square functional for system \eqref{Eq:Linear_system}-\eqref{Eq:Linear_system_bdc} is defined {as}
\begin{eqnarray}
J(U)&=&\|\mathscr{L} U -F \|^2_{(-s_1,-s_2,-s_3,-s_4)} \label{Eq:Lsfunctional}\\
    &=&\|\nu\nabla \times w +\nabla p - \bm{f}\|^2_{(-s_1,-s_2)}  + \|\nu(w - \nabla \times \bm{u})\|^2_{-s_3} + \|\nabla \cdot\bm{u}\|^2_{-s_4} \nonumber,
\end{eqnarray}
where $\{s_i\} $ and $\{t_j\}$ are indices for which the operators $\mathscr{L}$ and $\mathscr{R}$ satisfy the ellipticity supplementary and complementing conditions  \cite{ChangBo-Nan_1990,Gunzburger_1994} and that the VPV system has a unique solution.
The minimization of \eqref{Eq:Lsfunctional} is {well-posed} over a suitable subspace $\bm{U}$ of $H^{t_1}(\Omega)\times {H}^{t_2}(\Omega)\times H^{t_3}(\Omega)\times H^{t_4}(\Omega)$.
The values of $\{s_i\}$ and $\{t_j\}$ depend on particular boundary operators, which will be specified {later}.

The least squares principle is then given by: seek $U=(\bm{u}, w,p)\in \bm{U}$ such that\\
\begin{eqnarray}\label{Eq:minimization}
J(U)\leq J(\uwave{U}),\quad \forall \ \uwave{U}=(\uwave{\bm{u}},\uwave{w},\uwave{p})\in \bm{U}.
\end{eqnarray}
And we have the following results{.}
\begin{proposition}\label{Prop:estimate}
The problem \eqref{Eq:minimization} has a unique minimizer $U\in \bm{U}$, and there exist{s} a constant $C>0$ such that
\begin{eqnarray}\label{Eq:estimation}
\|\bm{u}\|^2_{(t_1,t_2)} + \|w\|^2_{t_3} + \|p\|^2_{t_4}\leq C \|\bm{f}\|^2_{(-s_1,-s_2)}.
\end{eqnarray}
where $\{s_i\} $ and $\{t_j\}$ are indices such that the operators $\mathscr{L}$ and $\mathscr{R}$ satisfy the ellipticity supplementary and complementing conditions and that the VPV system has a unique solution \cite{ChangBo-Nan_1990,Gunzburger_1994}.
\end{proposition}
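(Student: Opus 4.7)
The plan is to recast the minimization \eqref{Eq:minimization} as a symmetric variational problem, establish coercivity via the Agmon-Douglis-Nirenberg (ADN) a priori estimate, and then read off \eqref{Eq:estimation} by exploiting the fact that the exact Stokes solution achieves $J=0$. Since $J$ is a nonnegative quadratic functional, \eqref{Eq:minimization} is equivalent to the Euler equation: find $U\in\bm{U}$ such that $a(U,V)=\ell(V)$ for every $V\in\bm{U}$, where
\begin{equation*}
a(U,V)=(\mathscr{L}U,\mathscr{L}V)_{(-s_1,-s_2,-s_3,-s_4)},\qquad \ell(V)=(F,\mathscr{L}V)_{(-s_1,-s_2,-s_3,-s_4)},
\end{equation*}
and $\bm{U}$ is the closed affine subspace of $H^{t_1}\times H^{t_2}\times H^{t_3}\times H^{t_4}$ incorporating the essential condition $\mathscr{R}U=G$ together with the zero-mean constraint on $p$. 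Continuity of $a(\cdot,\cdot)$ and $\ell(\cdot)$ is immediate from the mapping properties of $\mathscr{L}$ in the $\{s_i\}/\{t_j\}$ scale, so the crux of the proof is coercivity of $a$ on $\bm{U}$.

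For coercivity I would invoke the ADN theory verified in \cite{ChangBo-Nan_1990,Gunzburger_1994}: since $\mathscr{L}$ satisfies the supplementary condition and $(\mathscr{L},\mathscr{R})$ satisfies the complementing condition for the chosen indices, every $V\in\bm{U}$ obeys the basic estimate
\begin{equation*}
\|V\|^2_{(t_1,t_2,t_3,t_4)}\leq C\bigl(\|\mathscr{L}V\|^2_{(-s_1,-s_2,-s_3,-s_4)}+\|V\|^2_{0}\bigr).
\end{equation*}
The compact lower-order term is absorbed by a standard Peetre-Tartar contradiction argument: if full coercivity failed, a sequence saturating the failure would, by the compact embedding $H^{t_j}\hookrightarrow L^2$, yield a nonzero limit $V_\infty\in\bm{U}$ with $\mathscr{L}V_\infty=0$ and $\mathscr{R}V_\infty=0$, contradicting uniqueness for the VPV--Stokes system stated in the earlier proposition equating \eqref{EQ:Velocity-Pressure-Vorticity_formulation}--\eqref{EQ:VPV_pbc} with \eqref{EQ:Stokes}. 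Hence $a(V,V)\geq c\|V\|^2_{(t_1,t_2,t_3,t_4)}$, and the Lax-Milgram theorem delivers a unique minimizer $U\in\bm{U}$.

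To close out \eqref{Eq:estimation}, I would use that the unique Stokes solution provided by the earlier proposition lies in $\bm{U}$ and makes $J$ vanish, so it must coincide with the minimizer and satisfy $\mathscr{L}U=F$ in the sense of $H^{-s_i}$. Plugging this identity into the coercivity bound gives
\begin{equation*}
\|\bm{u}\|^2_{(t_1,t_2)}+\|w\|^2_{t_3}+\|p\|^2_{t_4}\leq C\|\mathscr{L}U\|^2_{(-s_1,-s_2,-s_3,-s_4)} = C\|F\|^2_{(-s_1,-s_2,-s_3,-s_4)}=C\|\bm{f}\|^2_{(-s_1,-s_2)},
\end{equation*}
since the third and fourth components of $F$ are zero.

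The main obstacle is the algebraic verification of the ADN supplementary and complementing conditions for each of the two boundary operators $\mathscr{R}_1$ and $\mathscr{R}_2$ with their associated index pairs $\{s_i\},\{t_j\}$, which is the nontrivial technical ingredient underlying the coercivity step. This verification is essentially a symbol-level calculation at the boundary; in the present setting it is imported from \cite{ChangBo-Nan_1990,Gunzburger_1994} rather than redone, so the proof reduces to the structural argument above.
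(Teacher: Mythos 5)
Your proposal is correct and follows the standard argument for this result: the paper itself states Proposition \ref{Prop:estimate} without proof, importing it directly from \cite{ChangBo-Nan_1990,Gunzburger_1994}, and the ADN-based coercivity estimate combined with a compactness/uniqueness argument to drop the lower-order term, followed by Lax--Milgram and the observation that the exact Stokes solution attains $J=0$, is precisely how those references establish it. The only caveat is that the bound \eqref{Eq:estimation}, with only $\|\bm{f}\|^2_{(-s_1,-s_2)}$ on the right, presumes the homogeneous boundary data case (as the paper's subsequent index choices make explicit), which your affine-space framing should acknowledge before applying Lax--Milgram on the corresponding linear subspace.
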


Then we specify the above results to different boundary operators.
For \textit{homogenous pressure normal velocity boundary condition \eqref{Eq:Linear_system_bdc2}}, we can choose the indices $s_i=0$ and $t_j =1$, then the least square functional involves only $L^2(\Omega)$-norms of the residuals of all the equations, i.e. we have that
\begin{eqnarray}
J(U)&=&\|\mathscr{L} U -F \|^2_{0} \label{Eq:Lsfunctional_pnvbdc}\\
    &=&\|\nu\nabla \times w +\nabla p - \bm{f}\|^2_{0}  + \|\nu(w - \nabla \times \bm{u})\|^2_{0} + \|\nabla \cdot\bm{u}\|^2_{0} \nonumber,
\end{eqnarray}
and $\bm{U} =\bm{H}_n^{1}(\Omega)\times H^{1}(\Omega)\times {H}_0^{1}(\Omega) $, where $\bm{H}_n^{1}(\Omega)$ denotes the subspace of $ H^{1}(\Omega)\times {H}^{1}(\Omega)$
whose members have normal components equal to zero on the boundary.

For the case when system \eqref{Eq:Linear_system} is supplemented with the \textit{homogenous velocity boundary condition \eqref{Eq:Linear_system_bdc1}},
we have $s_1=s_2=0$, $s_3=s_4=-1$ and $t_1=t_2=2$, $t_3=t_4=1$, thus the least square functional now involves $H^1(\Omega)$-norms of the residuals of some of the equations,
we have
\begin{eqnarray}
J(U)&=&\|\mathscr{L} U -F \|^2_{(0,0,1,1)} \label{Eq:Lsfunctional_vbdc}\\
    &=&\|\nu\nabla \times w +\nabla p - \bm{f}\|^2_{0}  + \|\nu(w - \nabla \times \bm{u})\|^2_{1} + \|\nabla \cdot\bm{u}\|^2_{1} \nonumber,
\end{eqnarray}
and $\bm{U} =(H^{2}(\Omega)\cap H_0^{1}(\Omega))^2\times H^{1}(\Omega)\times \tilde{H}^{1}(\Omega)$.
This implies that the velocity field should be approximated in finite-dimensional subspace of $H^2(\Omega)$, i.e. continuous differentiable functions.
Hence the straightforward application of the LS functional \eqref{Eq:Lsfunctional_vbdc} bears no direct advantage over least square principle based on the primitive variable stokes equations.
In order to use merely continuous functions,
we shall consider a partition-dependent functional \cite{Gunzburger_1994} which will involve only weighted $L^2$-norms of the residuals,
so the least square functional becomes
\begin{eqnarray}
J(U)&=&\|\nu\nabla \times w +\nabla p - \bm{f}\|^2_{0}
     + h^{-2}\|\nu(w - \nabla \times \bm{u})\|^2_{0} + h^{-2}\|\nabla \cdot\bm{u}\|^2_{0},
\end{eqnarray}
where $h$ is the meshsize of the elements of a regular partition of the domain $\Omega$, which will be detailed later in Section \ref{subsection-NCLF}.
We can now choose to approximate each unknown in a subspace of $H^1$, i.e {$\bm{U}= [H_0^{1}(\Omega)]^2\times H^{1}(\Omega)\times\tilde{H}^{1}(\Omega)$}. 
\section{Deep least {s}quare {n}eural {n}etwork method}\label{SectionDLS}
This section describes the {VPVnet} method for the Stokes' equations.
Let $(\bm{u}_{\bm{\theta}}, w_{\bm{\theta}}, p_{\bm{\theta}})$ be an approximation of the solutions of the {first-order} velocity-pressure-vorticity formulation \eqref{Eq:Linear_system} of the Stokes' equations obtained with a neural networks $U_{\bm{\theta}}$.
We hope to learn the parameters ${\bm{\theta}}$ of the neural network such that $(\bm{u}_{\bm{\theta}}, w_{\bm{\theta}}, p_{\bm{\theta}})$ approximate the solutions $(\bm{u}, w, p)$ as well as possible.
To this end, we define a loss function based on the least square functional \eqref{Eq:Lsfunctional} of the problem, which will be presented in subsection \ref{subsectionLoss}.
The structure of the deep neural network  $U_{\bm{\theta}}$ is first described as follows:
\subsection{DNN comprised of ResNet blocks}\label{Section-constructionDNN}
The solutions of the Stokes' equations are approximated by a deep neural network comprised of ResNet blocks, which takes spatial coordinates as input and predicts the corresponding velocity, vorticity and pressure fields, i.e., $U_{\bm{\theta}}: (x,y)\mapsto({\bm{u}}_{\bm{\theta}},w_{\bm{\theta}},p_{\bm{\theta}})$.
The construction of the DNN contains mainly the following two steps:

\textbf{First}, we construct a neural network which takes  $\bm{x}=[x,y]$ as input and neural networks $\bm{Y}=[\psi_{\bm{\theta}}(x,y), w_{\bm{\theta}}(x,y), p_{\bm{\theta}}(x,y)]$ as output.
It is comprised of 4 ResNet blocks, each block consists of two linear transformations, two activation functions and a residual connection,
and both the inputs $s$ and the output $t$ of the block are vectors in $R^m$ (see Figure \ref{figResnet} a schematic illustration of the Resnet networks).
The $i$th block can be expressed as:
\begin{equation}\label{Resnet_block}
t=\f_i(s) =\sigma\circ(\bm{W}_{i,2}\cdot \sigma\circ(\bm{W}_{i,1}s + \bm{b}_{i,1})+ \bm{b}_{i,2}) + s
\end{equation}
where $\bm{W}_{i,1}, \bm{W}_{i,2}\in R^{m\times m}$, $\bm{b}_{i,1},\bm{b}_{i,2}\in R^{m}$ are parameters associated with the block.
$\sigma$ is the activation function.
The dimension of $s$ and $t$ must be equal in \eqref{Resnet_block}, if it is not the case,
a linear projection $\bm{W}_{s}$ by the shortcut connections can be performed to match the dimensions, i.e. :
\begin{equation}\label{Eq:shortcut}
t=\f_i(s) =\sigma\circ(\bm{W}_{i,2}\cdot \sigma\circ(\bm{W}_{i,1}s + \bm{b}_{i,1})+ \bm{b}_{i,2}) + \bm{W}_{s}s.
\end{equation}
It has been shown by experiments in \cite{He_Resnet_2016} that the identity mapping is sufficient for addressing the degradation problem
and is economical, and thus $\bm{W}_{s}$ is only used when matching dimensions.

\textbf{Second}, based on the neural networks $\bm{Y}=[\psi_{\bm{\theta}}(x,y), w_{\bm{\theta}}(x,y), p_{\bm{\theta}}(x,y)]$, we derive the neural networks for $u_{\bm{\theta}}(x,y)$, $v_{\bm{\theta}}(x,y)$ by using the {\it automatic differentiation}.
We define

\begin{equation}
u_{\bm{\theta}} :=\frac{\partial\psi_{\bm{\theta}}}{\partial y},\quad v_{\bm{\theta}} :=-\frac{\partial\psi_{\bm{\theta}}}{\partial x}.
\end{equation}
Hence we have $\frac{\partial u_{\bm{\theta}}}{ \partial x} + \frac{\partial v_{\bm{\theta}}}{ \partial y} = \frac{\partial^2\psi_{\bm{\theta}}}{\partial y \partial x}- \frac{\partial^2\psi_{\bm{\theta}} }{\partial x \partial y}=0$, here we define $u_{\bm{\theta}}$ and $v_{\bm{\theta}}$ in this way such that the conservation law (i.e. $\nabla\cdot \bm{u}_{\bm{\theta}}=0$) is satisfied directly by its definition.
The networks $u_{\bm{\theta}}$, $v_{\bm{\theta}}$ can be derived in a variety of ways, here we use automatic differentiation method \cite{BaydinPearlmutterRadulSiskind2017} available in Tensorflow during the programming, e.g. $u_{\bm{\theta}}=\texttt{tf.gradients}(\psi_{\bm{\theta}},y)[0]$.
Note that Tensorflow uses reverse mode automatic differentiation, also known as backpropagation, for its gradient operation.
\begin{figure}[H]
	\begin{center}
    \includegraphics [width=0.8\textwidth]{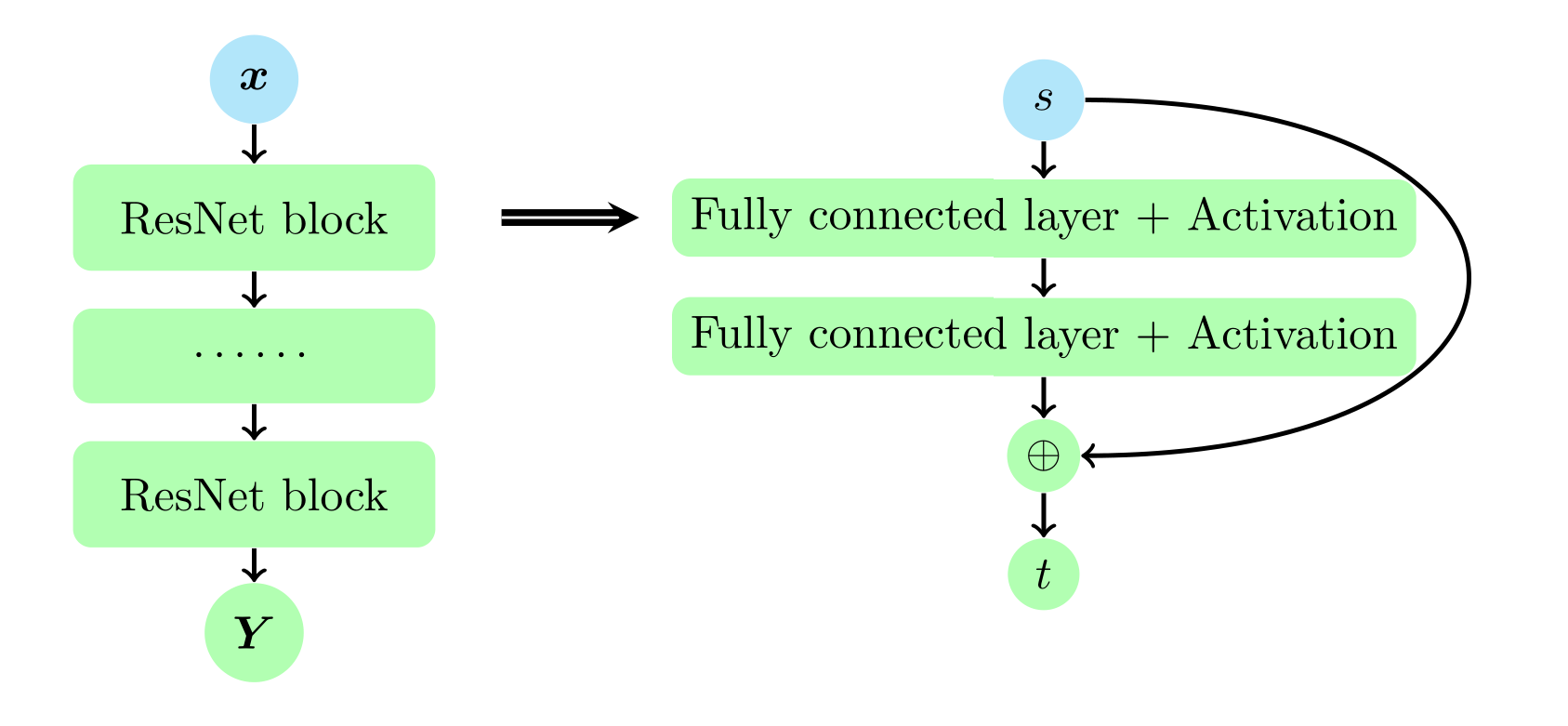}
	\end{center}
	\caption{A schematic illustration of the Resnet networks for the construction of $\f_{\bm{\theta}_2}$.}
	\label{figResnet}
\end{figure}

\begin{rremark}
The form of the ResNet block is flexible, which may contain two or three layers, or even more layers are possible.
But if the block has only a single layer, Eq \eqref{Resnet_block} is in fact a linear layer, for which no advantages have been observed in \cite{He_Resnet_2016}.
The parameters $\bm{W}$ and $\bm{b}$ can be randomly initialized using the Xavier scheme (which returns truncated normal distribution with the standard deviation of $1/m$).
As the choice of the activation function $\sigma$, $tanh$, $sinusoid$, $sigmoid$, etc. are possible, which will be verified and tested later in Section \ref{Section-numerical-experiments}.
Note that in \cite{Khodayi-MehrZavlano-VarNet2020}, the authors remarked that for variational type method $tanh$ might be incapable of capturing some solutions. 
\end{rremark}
\subsection{Loss function}\label{subsectionLoss}
Let $(\bm{u}_{\bm{\theta}}, w_{\bm{\theta}}, p_{\bm{\theta}})$ be an approximation of the solutions of system \eqref{Eq:Linear_system}- \eqref{Eq:Linear_system_bdc} obtained with the neural networks $U_{\bm{\theta}}(\bm{x})$.
We define a loss function based on the least square functional \eqref{Eq:Lsfunctional} which is written in the following form:
\begin{eqnarray}
\mathscr{J}(U_{\bm{\theta}})&=&\|\mathscr{L} U_{\bm{\theta}} -F \|^2_{(-s_1,-s_2,-s_3,-s_4)} +  \alpha\|\mathscr{R} U_{\bm{\theta}} -G\|^2_{1/2, \Gamma} \label{Eq:loss}
\end{eqnarray}
where we seek $U_{\bm{\theta}} \in H^{t_1}(\Omega)\times H^{t_2}(\Omega)\times H^{t_3}(\Omega)\times H^{t_4}(\Omega)$, $\{s_i\}$ and $\{t_i\}$ are admissible values for specific boundary operators, $\alpha$ is a positive constant, and the sobolev norm $\|\cdot\|_{1/2}$ is defined as
\begin{eqnarray}
&&\|\ell\|^2_{1/2,\Gamma}:= \int_{\Gamma}|\ell|^2 d\sigma + \int\int_{\Gamma\times\Gamma}\frac{|\ell(x) - \ell(y)|^2}{|x-y|^d} d\Gamma(x)d\Gamma(y). \label{demi-norm}
\end{eqnarray}
The treatment of boundary conditions is of great importance for numerical simulations.
For the sake of clarity, here we take into account the boundary conditions \eqref{Eq:Linear_system_bdc} directly in the least square functional.
By the trace theorem, the boundary norm is of $1/2$ order weaker than the interior norm, hence the norm $\|\cdot\|_{1/2}$ is employed.
Note that the extra penalty terms due to the boundary conditions could have negative effects on the training process and the final accuracy,
several efforts have been made recently  for the deep neural network methods to deal with boundary conditions.
For example, Liao et. al proposed a Nitche method \cite{Ming2019} to deal with the essential boundary conditions without significant extra computational cost.
Sheng et. al \cite{PFNNShengYang2020} developed a penalty free neural network methods, {where they} employ two networks, rather than just one, to construct the approximate solutions of the PDEs. With one networks satisfying the essential boundary conditions and the other handling the rest part of the domain, an unconstrained optimization problem is to be solved instead of a constrained one. We refer to the references above and herein \cite{LagarisLikas2000,McFallMahan_2009} for possible improve treatment of the boundary conditions.

Thus the VPVnet method is: find $U_{\bm{\theta}}: (x,y)\mapsto(\bm{u}_{\bm{\theta}},w_{\bm{\theta}},p_{\bm{\theta}})$ such that
\begin{eqnarray}\label{Eq:minimization_VPV}
\mathscr{J}(U_{\bm{\theta}}) = \min_{\bm{\hat{\theta}}\in R^n} \mathscr{J}(U_{\hat{\bm{\theta}}}).
\end{eqnarray}
And we have the following remark:
\begin{rremark}
Since $\nabla \cdot \bm{u}_{\bm{\theta}}$ vanish by construction of the neural network (see section \ref{Section-constructionDNN}), the neural network $U_{\bm{\theta}}$ can be in fact trained by minimizing the loss function:
\begin{eqnarray}\label{Eq:loss_reduction}
\mathscr{J}(U_{\bm{\theta}})&=&\|\nu\nabla \times w_{\bm{\theta}} +\nabla p_{\bm{\theta}} - \bm{f}\|^2_{(-s_1,-s_2)} \\
&+& \|\nu(w_{\bm{\theta}} - \nabla \times \bm{u}_{\bm{\theta}})\|^2_{-s_3} + \alpha\|\mathscr{R} U_{\bm{\theta}} -G\|^2_{1/2,\Gamma}.~~~~~~~~~~~~~~~\nonumber
\end{eqnarray}
More precisely, for the pressure normal velocity boundary condition \eqref{Eq:Linear_system_bdc2}, we have
\begin{eqnarray}\label{Eq:loss_pnv}
\mathscr{J}(U_{\bm{\theta}})&=&\|\nu\nabla \times w_{\bm{\theta}} +\nabla p_{\bm{\theta}} - \bm{f}\|^2_{0} \\
&+&\|\nu(w_{\bm{\theta}} - \nabla \times \bm{u}_{\bm{\theta}})\|^2_{0} + \alpha\|(\bm{u}_{n,{\bm{\theta}}},p_{\bm{\theta}})^T - (\bm{u}_n^0,p^0)^T \|^2_{1/2,\Gamma}.\nonumber
\end{eqnarray}
For the velocity boundary condition \eqref{Eq:Linear_system_bdc1}, i.e. \eqref{EQ:VPV_Vbc} , we have
\begin{eqnarray}\label{Eq:loss_vbdc}
\mathscr{J}(U_{\bm{\theta}})&=&\|\nu\nabla \times w_{\bm{\theta}} +\nabla p_{\bm{\theta}} - \bm{f}\|^2_{0} \\
&+& h^{-2}\|\nu(w_{\bm{\theta}} - \nabla \times \bm{u}_{\bm{\theta}})\|^2_{0} + \alpha\|\bm{u}_{\bm{\theta}} - \bm{g} \|^2_{1/2,\Gamma}.~~~~~~~~~~~~~~~\nonumber
\end{eqnarray}
\end{rremark}
\subsection{Numerical computation of loss function}\label{subsection-NCLF}
We consider {evaluating} the loss functions \eqref{Eq:loss_reduction} by numerical approximations,
i.e. we use Gauss-Legendre quadrature rules to compute the integration of the loss functions.
Once the loss function is computed, the gradient of the numerical approximation can be naturally calculated using automatic differentiation algorithm available in Tensorflow for the optimization process thereafter.
Similarly as in \cite{Caizhiqiang2020}, the computation make use of a partition of the domain.
Let $\D_h$ be an uniform square grid partition of the polygonal domain $\Omega\subset\mathbb{R}^2$ (See figure \ref{Fig:TD}).
Denote by $h\coloneqq \max_{{D} \in \D_h}h_{D}$ the meshsize of $\D_h$, where $h_{D}=\diam({D})$ is the diameter of the element ${D} \in \D_h$. Denote by $\E_h$ the edge set of $\D_h$, and $\E_h^0 \subset \E_h$ the set of all interior edges. The set of boundary edges is denoted as $\E_h^B\coloneqq\E_h \backslash\E_h^0$. The diameter of the edge $e\in \E_h$ is denoted as $h_e=\diam(e)$.
Throughout the paper, we employ one-point Gaussian quadrature, i.e. only the element center of each grid is considered as the quadrature point.
We use $\bm{x}_D$ and $\bm{x}_E$ to denote the quadrature points in element $D$ and on boundary $e$, respectively.
Since the Sobolev norm $\|\cdot\|_{1/2}$  \eqref{demi-norm} is not computationally feasible,
we approximate it by weighted $L^2$ norms with local weights $h_e^{-1/2}$.
By \eqref{Eq:loss_pnv},
we have the following discrete loss function
\begin{figure}[htbp]
\centering
\subfigure[QPs for boundary conditions ]{
\includegraphics [width=0.3\textwidth]{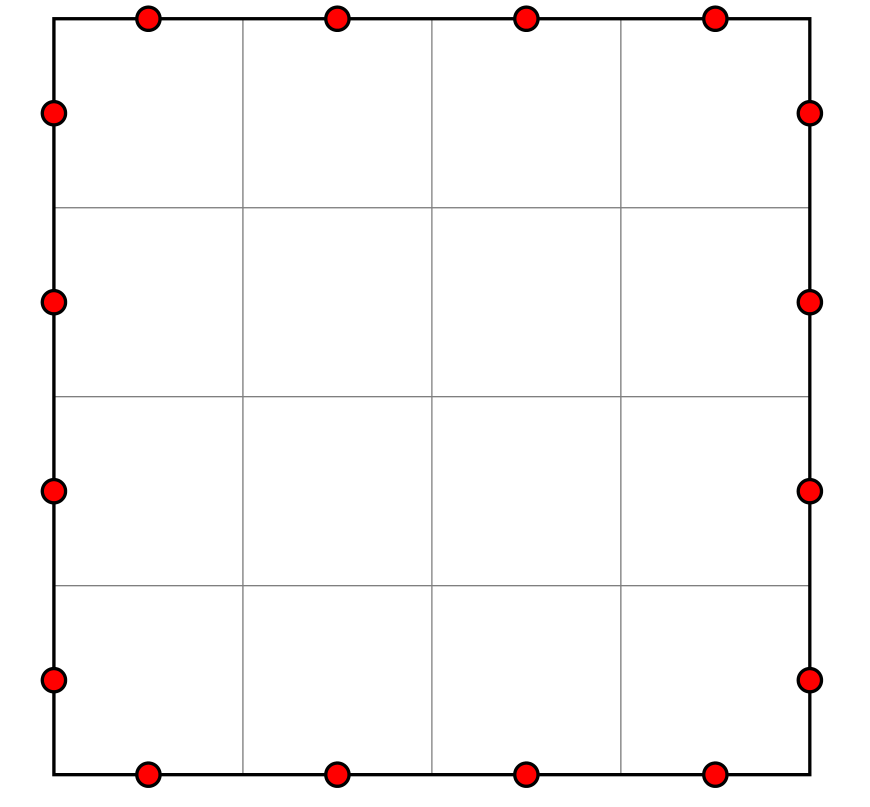}}
\hskip20pt
\subfigure[QPs for residuals of equations in $\Omega$ ]{
\includegraphics [width=0.3\textwidth]{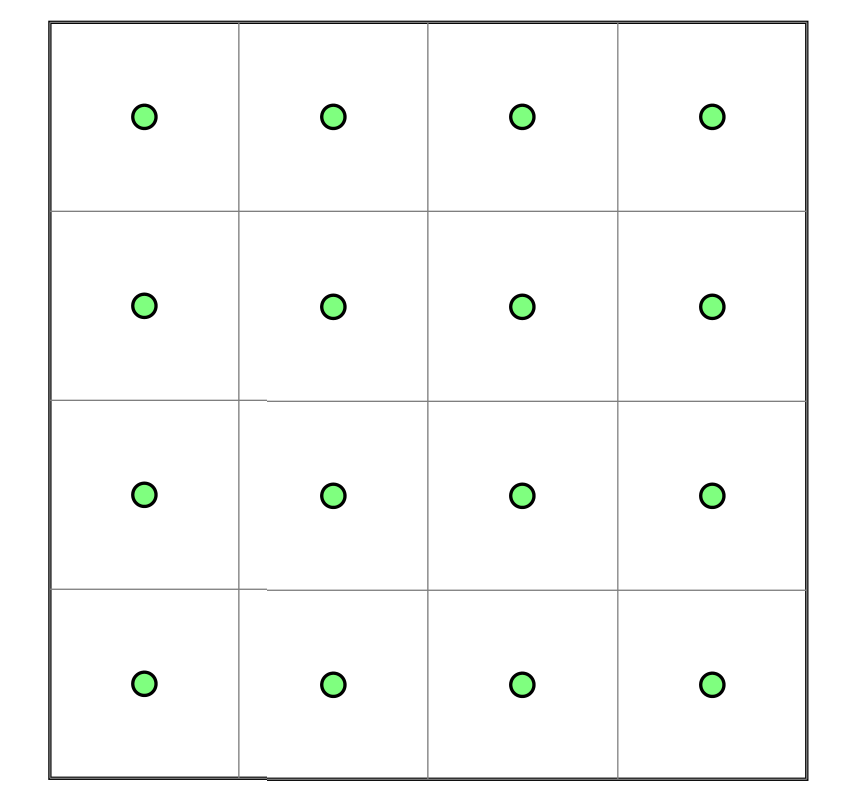}}
\caption{A uniform partition of domain $\Omega$ and related quadrature points (QPs) for the computation of the LS functional.}
\label{Fig:TD}
\end{figure}
\begin{eqnarray}\label{Eq:loss_discret}
&&\hat{\mathscr{J}}(\hat{U}_{\bm{\theta}}(\bm{x}))\\&\approx & \sum_{D \in \D_h} \left[(p_x + \nu w_y - f^{(1)})^2_{(\bm{x}_D,\bm{\theta})} + (p_y - \nu w_x - f^{(2)})^2_{(\bm{x}_D,\bm{\theta})} \right]|D|\nonumber\\
&+& \sum_{D \in \D_h} \nu^2h^{-2}(w + u_y - v_x)^2_{(\bm{x}_D,\bm{\theta})}|D| + \alpha\sum_{e \in \E^B_h}h_e^{-1}(\bm{u} -\bm{g}_D)^2_{(\bm{x}_E,\bm{\theta})}|e|,\nonumber
\end{eqnarray}
for the \textit{velocity boundary condition \eqref{Eq:Linear_system_bdc1}}, where $\alpha$ is positive constants, $|D|$ and $|e|$ are the measures of element $D$ and boundary $e$.
The discrete loss function for the \textit{pressure normal velocity boundary condition \eqref{Eq:Linear_system_bdc2}} can be derived similarly using \eqref{Eq:loss_vbdc} and will not be detailed here.
The discrete VPVnet method is to find $\hat{U}_{\bm{\theta}}: (x,y)\mapsto(\hat{\bm{u}}_{\bm{\theta}},\hat{w}_{\bm{\theta}},\hat{p}_{\bm{\theta}})$ such that
\begin{eqnarray}\label{Eq:minimization_dis}
\hat{\mathscr{J}}(\hat{U}_{\bm{\theta}}) = \min_{\bm{\hat{\theta}}\in R^n} \hat{\mathscr{J}}(\hat{U}_{\hat{\bm{\theta}}}).
\end{eqnarray}
The discrete loss function is lower bounded by zero, since this bound is attainable for the exact solution of the Stokes' equations, the values of the loss function reflects how well the functions $(\hat{\bm{u}}_{\bm{\theta}},\hat{w}_{\bm{\theta}},\hat{p}_{\bm{\theta}})$ approximate the solution of the Stokes' equations.
In practice, the problem \eqref{Eq:minimization_dis} is solved by an iterative method, such as the stochastic gradient decent method, which will be detailed in the section of numerical experiments.

\begin{rremark}
Note that here we use Gauss-Legendre quadrature rules relying on a partition of the domain
to compute the integration of the loss functions.
In this way, the numerical error of evaluating the integral can be computed to a desired accuracy
through either uniform or adaptive partition of the domain,
which can be easily estimated and controlled.
Otherwise, we can sample some points in the domain
and use those sampled points to define the discrete loss function,
then the method become a meshless method.
For example, we sample $M_i$ points inside $\Omega$, $M_{\partial\Omega}$  points on $\Gamma$, and define the discrete loss function by the following weighted mean squared error,
\begin{eqnarray}\label{Eq:loss_discret}
&&\hat{\mathscr{J}}(\hat{U}_{\bm{\theta}}(\bm{x}))\\&\approx & \sum_{M_i} \left[(p_x + \nu w_y - f^{(1)})^2_{(\bm{x}_{M_i},\bm{\theta})} + (p_y - \nu w_x - f^{(2)})^2_{(\bm{x}_{M_i},\bm{\theta})} \right]\nonumber\\
&+&\alpha_2\sum_{M_i} \nu^2(w + u_y - v_x)^2_{(\bm{x}_{M_i},\bm{\theta})} + \alpha_2\sum_{M_{\partial\Omega}}(\bm{u} -\bm{g}_{M_{\partial\Omega}})^2_{(\bm{x}_{M_{\partial\Omega}},\bm{\theta})},\nonumber
\end{eqnarray}
where $\alpha_2$ is a parameter to be determined and we adopt $\alpha_2 = 2500$ in the computation of test cased \eqref{Eq:testcase2D} and \eqref{Eq:testcase3D} on irregular domain (cf. Figure \ref{figmeshless:test1_test2}).
The derivative of the discrete loss function is calculated using automatic differentiation algorithm available in Tensorflow.
\end{rremark}
\section{Convergence analysis of VPVnet method}\label{Section:convergence}
According to the universal approximation property \cite{Hornik_1991}, the neural networks can approximate any continuous function with small errors.
Hence approximation errors of the neural network $U_{\bm{\theta}}$  can be small for any $U=(\bm{u},w,p)\in [C^0(\Omega)]^4$.
In this section, we will {analyze the} convergence of the VPVnet method.
The following is a relevant theorem \cite{XieCao_2011,Biswas_2020} that will be used in the demonstration:
\begin{theorem}\label{Th:approximation}
Suppose that $\sigma \in C^{\infty}(R)$, $\sigma^{(\nu)}(0)\neq 0$ for $\nu = 0,1,\cdots,$ and $K \subset R^d$ is any compact set.
If $f\in C^k(K )$, then a function $\varphi_n$ represented by a DNN with complexity $n\in N$ {exists} such that
\begin{eqnarray}
\|D^{\alpha}f- D^{\alpha}\varphi_n\|_{C(K)} = O\left(\frac{1}{n^{(k-|\alpha|)/d})}\omega(D^{\beta}f, \frac{1}{n^{1/d}}\right)
\end{eqnarray}
holds for all multi-indexes $\alpha$, $\beta$ with $|\alpha|\leq k$, $|\beta|=k$, where
\begin{eqnarray}
\omega(g,\delta) = \sup_{x,y \in K, |x-y|\leq\delta}|g(x) - g(y)|.
\end{eqnarray}
\end{theorem}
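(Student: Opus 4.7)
The plan is to reduce the claim, via classical multivariate polynomial approximation, to a lemma about how well a shallow neural network with activation $\sigma$ can reproduce polynomials together with all their partial derivatives. The hypothesis $\sigma\in C^{\infty}(\mathbb{R})$ with $\sigma^{(\nu)}(0)\neq 0$ for every $\nu\geq 0$ is exactly what is needed to realise every monomial as a limit of divided differences of dilated copies of $\sigma$, so this is where I would start.

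First I would prove the following univariate building block: for any $\nu\in\mathbb{N}$ and any compact $I\subset\mathbb{R}$, the monomial $x^\nu$ can be approximated in the $C^\nu(I)$ norm to arbitrary accuracy by a linear combination of at most $\nu+1$ functions of the form $\sigma(\lambda_i x)$. The idea is to use the Taylor expansion
\[
\sigma(\lambda x) \;=\; \sum_{j=0}^{N}\frac{\sigma^{(j)}(0)}{j!}\lambda^{j} x^{j} + R_{N}(\lambda x),
\]
pick $\nu+1$ distinct scales $\lambda_0,\dots,\lambda_\nu$ tending to $0$, and invert the Vandermonde system in the $\lambda_i$'s to isolate $x^\nu$. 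Because $\sigma^{(\nu)}(0)\neq 0$, this inversion is well-posed, and since the remainder is a higher power of $\lambda$, the approximation error in $C^\nu$ norm is as small as one wishes. The extension to $d$ variables is obtained by polarisation, representing each multivariate monomial of degree $\nu$ as a finite linear combination of $\nu$th powers of linear forms $a\cdot x$; applying the univariate lemma to each such form then approximates an arbitrary polynomial of degree $m$ in $d$ variables, together with all of its partial derivatives up to order $m$, by a neural network of complexity $O(m^{d})$.

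Second, I would invoke a multivariate Jackson-type theorem to find, for each integer $m$, an algebraic polynomial $P_m$ of total degree at most $m$ with
\[
\|D^{\alpha}f-D^{\alpha}P_m\|_{C(K)} \;\leq\; C\,m^{-(k-|\alpha|)}\,\omega(D^{\beta}f,1/m)
\]
for every $|\alpha|\leq k$, $|\beta|=k$. This simultaneous $C^k$ polynomial approximation on a compact set is classical (e.g.\ the Bagby--Bos--Levenberg theorem, or the multivariate Ditzian--Totik machinery used in \cite{XieCao_2011}). Combining the two approximations, one chooses the neural-network error in the building block small enough to be dominated by the Jackson estimate, and sets $m\asymp n^{1/d}$ so that the resulting network has complexity $n$. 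Substitution then yields the announced rate $O\!\left(n^{-(k-|\alpha|)/d}\,\omega(D^{\beta}f,n^{-1/d})\right)$ uniformly in $\alpha,\beta$.

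The main obstacle is the bookkeeping in the first step: inverting the Vandermonde system amplifies the remainder by a factor depending on the conditioning of $\{\lambda_i\}$, so one must balance how fast the $\lambda_i$ shrink against how large the inversion constants grow, and verify that this choice does not spoil the complexity count $O(m^d)$ after polarisation. A secondary technical point is ensuring that the polarisation identity and the Jackson estimate both respect derivatives up to order $k$ on the same compact set $K$; once these are in place, the final composition of estimates and the choice $m\asymp n^{1/d}$ are routine.
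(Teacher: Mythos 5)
The paper does not prove this statement: it is imported verbatim as a known result, citing \cite{XieCao_2011,Biswas_2020}, and is used later only as a black box in the proof of Theorem 4.2. So there is no in-paper argument to compare against. That said, your reconstruction is essentially the standard proof underlying the cited reference (Mhaskar-style simultaneous approximation, as carried out by Xie and Cao): (i) realise monomials and their derivatives via divided differences of $\sigma(\lambda x)$ in the scale parameter, which is exactly where the hypothesis $\sigma^{(\nu)}(0)\neq 0$ for all $\nu$ enters; (ii) pass to several variables by writing monomials as combinations of powers of linear forms; (iii) invoke a multivariate Jackson/Bagby--Bos--Levenberg estimate for simultaneous $C^k$ polynomial approximation; (iv) set $m\asymp n^{1/d}$. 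The bookkeeping issue you flag (ill-conditioning of the Vandermonde system as the $\lambda_i\to 0$) is genuinely present but harmless here, because the network-versus-polynomial discrepancy only needs to be made smaller than the Jackson term for each fixed degree $m$, without affecting the neuron count $O(m^d)$; the rate comes entirely from steps (iii)--(iv). Your argument is sound and matches the route the paper implicitly relies on through its citation.
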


Recall that the VPV system \eqref{Eq:Linear_system}-\eqref{Eq:Linear_system_bdc} is a uniformly bounded elliptic system in the sense of Douglis and Nirenberg.
The system is well posed and has a unique solution.
Hence, the minimization problem \eqref{Eq:minimization_VPV} has a unique solution, with the value of the infimum being 0, and the infimum is attained at the solution $U$.
In this study, for simplicity, we consider only the case $s_i=0$ and $t_j =1$, and $G =0$, although the general case is similar.
According to proposition \ref{Prop:estimate}, we have the following a priori estimate:
\begin{eqnarray}\label{Eq:estimate_pnv}
\|\bm{u}\|^2_{1} + \|w\|^2_{1} + \|p\|^2_{1} \leq C\|\bm{f}\|^2_{0}.
\end{eqnarray}
First, we show that when the neural network $U_{\bm{\theta}}$ and the actual solution $U$ are close to each other, we can control the loss function \eqref{Eq:loss}.
\begin{theorem}\label{Th:convergence_1}
Let $U = (\bm{u}, w,p) \in [H^{1}(\Omega)]^4$ be the solution of system \eqref{Eq:Linear_system}-\eqref{Eq:Linear_system_bdc}, then there exist{s} $U_{\bm{\theta}}:(x,y)\mapsto(\bm{u}_{\bm{\theta}}, w_{\bm{\theta}},p_{\bm{\theta}})$ with $\|U - U_{\bm{\theta}} \|_1\leq \varepsilon$ such that
\begin{eqnarray}
J(U_{\bm{\theta}}) = \|\mathscr{L}U_{\bm{\theta}} -F \|^2_0 + \|\mathscr{R}U_{\bm{\theta}}\|^2_{1/2,\Gamma}\leq c\varepsilon^2 \text{ and } \| U_{\bm{\theta}}\|_1 \leq \tilde{M},
\end{eqnarray}
where $c$ is a positive constant.
\end{theorem}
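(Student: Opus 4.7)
The plan is to exploit the fact that $U$ already solves $\mathscr{L}U = F$ with $\mathscr{R}U = 0$, so that the two residuals in $J(U_{\bm{\theta}})$ can be rewritten as $\mathscr{L}(U_{\bm{\theta}} - U)$ and $\mathscr{R}(U_{\bm{\theta}} - U)$ respectively, and each of these is controlled by the $H^1$ distance between $U_{\bm{\theta}}$ and $U$.

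First I would produce the approximating network. Since $U \in [H^1(\Omega)]^4$ but Theorem \ref{Th:approximation} requires $C^k$ regularity, I would use density of $[C^\infty(\bar\Omega)]^4$ in $[H^1(\Omega)]^4$ to pick a smooth $\tilde U$ with $\|U-\tilde U\|_1 \le \varepsilon/2$, and then invoke Theorem \ref{Th:approximation} with $k=1$ (applied componentwise) to find a DNN $U_{\bm{\theta}}$ of sufficient complexity whose values and first partial derivatives approximate those of $\tilde U$ uniformly on $\bar\Omega$. Since $\bar\Omega$ has finite Lebesgue measure, uniform approximation of $\tilde U$ and $\nabla\tilde U$ translates into an $L^2$, hence $H^1$, approximation: $\|\tilde U - U_{\bm{\theta}}\|_1 \le \varepsilon/2$. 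The triangle inequality then gives $\|U - U_{\bm{\theta}}\|_1 \le \varepsilon$.

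Next I would bound the interior residual. Because $\mathscr{L}$ in \eqref{Eq:Linear_system} is a first-order linear differential operator with bounded coefficients, it maps $[H^1(\Omega)]^4$ continuously into $[L^2(\Omega)]^4$, so there is a constant $C_{\mathscr{L}}>0$ with $\|\mathscr{L} V\|_0 \le C_{\mathscr{L}}\|V\|_1$ for all admissible $V$. Using $\mathscr{L}U = F$,
\begin{equation*}
\|\mathscr{L}U_{\bm{\theta}} - F\|_0 \;=\; \|\mathscr{L}(U_{\bm{\theta}} - U)\|_0 \;\le\; C_{\mathscr{L}}\|U_{\bm{\theta}} - U\|_1 \;\le\; C_{\mathscr{L}}\varepsilon.
\end{equation*}
For the boundary term, since $G=0$ by assumption we have $\mathscr{R}U_{\bm{\theta}} = \mathscr{R}(U_{\bm{\theta}} - U)$. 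The boundary operator $\mathscr{R}$ (either $\mathscr{R}_1$ or the pressure/normal-velocity variant) is a trace-type operator, so the standard trace inequality yields a constant $C_{\mathrm{tr}}$ with $\|\mathscr{R}(U_{\bm{\theta}}-U)\|_{1/2,\Gamma} \le C_{\mathrm{tr}}\|U_{\bm{\theta}}-U\|_1 \le C_{\mathrm{tr}}\varepsilon$. Combining the two estimates gives $J(U_{\bm{\theta}}) \le (C_{\mathscr{L}}^2 + C_{\mathrm{tr}}^2)\varepsilon^2 =: c\varepsilon^2$.

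Finally, the a priori bound on $\|U_{\bm{\theta}}\|_1$ follows from Proposition \ref{Prop:estimate} and the triangle inequality:
\begin{equation*}
\|U_{\bm{\theta}}\|_1 \le \|U\|_1 + \|U_{\bm{\theta}}-U\|_1 \le C\|\bm f\|_0 + \varepsilon \;=:\; \tilde M,
\end{equation*}
where \eqref{Eq:estimate_pnv} is used for $\|U\|_1$. The main obstacle I expect is not any of the estimates themselves (they are essentially continuity of $\mathscr{L}$ and the trace map) but rather upgrading Theorem \ref{Th:approximation}, stated for $C^k$ functions, to an $H^1$ statement; the density/mollification step above is the cleanest way to handle it, and it is the only place the regularity hypothesis on $U$ enters nontrivially.
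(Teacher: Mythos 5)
Your proof is correct and follows essentially the same route as the paper's: rewrite the interior and boundary residuals as $\mathscr{L}(U_{\bm{\theta}}-U)$ and $\mathscr{R}(U_{\bm{\theta}}-U)$ using $\mathscr{L}U=F$ and $G=0$, bound each by the operator norm of $\mathscr{L}$ and the trace constant times $\|U_{\bm{\theta}}-U\|_1$, and obtain $\|U_{\bm{\theta}}\|_1\le\tilde M$ from the a priori estimate \eqref{Eq:estimate_pnv} plus the triangle inequality. Your intermediate density/mollification step (smooth $\tilde U$ with $\|U-\tilde U\|_1\le\varepsilon/2$ before invoking Theorem \ref{Th:approximation}) is in fact more careful than the paper, which applies the $C^k$ approximation theorem directly to the $H^1$ solution without comment.
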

\begin{proof}
We remark first that given any $\varepsilon>0$ , by Theorem \ref{Th:approximation},
there exist{s} a DNN $U_{\bm{\theta}}$ such that $\|U - U_{\bm{\theta}} \|_1\leq \varepsilon$.
It follows that
\begin{eqnarray}
\| \mathscr{L}U_{\bm{\theta}} - F\|^2_{0}
& = &\|\mathscr{L}U_{\bm{\theta}} - \mathscr{L}U\|^2_{0} \label{Eq:ineq_domain}\\
&\leq &C^2_{\mathscr{L}}\|U_{\bm{\theta}} - U\|^2_{1}\nonumber\\
&\leq &C^2_{\mathscr{L}}\varepsilon^2, \nonumber
\end{eqnarray}
where $C_{\mathscr{L}}$ is the operator norm bound of $\mathscr{L}$.
Since $ G =\bm{0}$, we also have
\begin{eqnarray}
\|\mathscr{R}U_{\bm{\theta}}\|^2_{1/2,\Gamma} & = &\|\mathscr{R}U_{\bm{\theta}} - G\|^2_{1/2,\Gamma} \label{Eq:ineq_bdc}\\
& = & \|\mathscr{R}U_{\bm{\theta}} - \mathscr{R}U\|^2_{1/2,\Gamma}\nonumber\\
& \leq & C^2_{\mathscr{R}}\|U_{\bm{\theta}} - U\|^2_{H^1(\Omega)} \leq  C^2_{\mathscr{R}}\varepsilon^2,  \nonumber
\end{eqnarray}
where $C_{\mathscr{R}}$ is the norm bound of the boundary operator.
Combining \eqref{Eq:ineq_domain} and \eqref{Eq:ineq_bdc}, we have
\begin{eqnarray}
J(U_{\bm{\theta}}) = \|\mathscr{L}(\w_{\bm{\theta}}) -F \|^2_0 + \|\mathscr{R}U_{\bm{\theta}}\|^2_{1/2,\Gamma} \leq c\varepsilon^2.
\end{eqnarray}
Moreover, by \eqref{Eq:estimate_pnv}, we have
\begin{eqnarray}
 \| U_{\bm{\theta}}\|_{1} & \leq &\|U_{\bm{\theta}} - U \|_{1} + \|U\|_{1}\\
 & \leq & \varepsilon + M = \tilde{M}, \nonumber
 \end{eqnarray}
 where $M$ is the norm bound of $\bm{f}$.
\end{proof}

Second, we show that by controlling the loss function \eqref{Eq:loss}, we can have an approximation $U_{\bm{\theta}}$ to the solution $U$ with small errors.
The error estimate is given in the theorem below.
\begin{theorem}\label{Th:convergence_2}
Let $U = ({\bm{u}}, w,p)\in [H^{1}(\Omega)]^4$ be the solution of system \eqref{Eq:Linear_system}-\eqref{Eq:Linear_system_bdc} and let $U_{\bm{\theta}}:(x,y)\mapsto({\bm{u}_{\bm{\theta}}}, w_{\bm{\theta}},p_{\bm{\theta}})$  be such that
\begin{eqnarray}
J(U_{\bm{\theta}}) = \|\mathscr{L}U_{\bm{\theta}} -F\|^2_{0} +  \|\mathscr{R}U_{\bm{\theta}}\|^2_{1/2,\Gamma}\leq \varepsilon^2. \label{Eq:ErrorEstimatesJ}
\end{eqnarray}
Then there exist{s} a constant $c>0$ such that
\begin{eqnarray}
\|U - U_{\bm{\theta}} \|_{0}\leq \|U - U_{\bm{\theta}} \|_{1}\leq c\varepsilon.
\end{eqnarray}
\end{theorem}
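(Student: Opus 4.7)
The plan is to exploit the ellipticity of the Douglis--Nirenberg system $(\mathscr{L},\mathscr{R})$ and reduce the error estimate to the a priori bound already stated in Proposition \ref{Prop:estimate}. The guiding idea is that the residual $J(U_{\bm{\theta}})$ is precisely the natural ``data norm'' for the VPV system, so controlling $J$ should control $\|U-U_{\bm{\theta}}\|_1$ up to a constant.

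First I would set $E = U - U_{\bm{\theta}}$ and use linearity to rewrite the residuals as the data of a boundary value problem for $E$:
\begin{equation*}
\mathscr{L}E = \mathscr{L}U - \mathscr{L}U_{\bm{\theta}} = F - \mathscr{L}U_{\bm{\theta}}, \qquad \mathscr{R}E = G - \mathscr{R}U_{\bm{\theta}} = -\mathscr{R}U_{\bm{\theta}},
\end{equation*}
where the last equality uses the simplifying assumption $G=0$ adopted just before the theorem. Thus $E$ solves an inhomogeneous VPV system whose right-hand sides have $L^2(\Omega)$ and $H^{1/2}(\Gamma)$ norms directly controlled by $\sqrt{J(U_{\bm{\theta}})} \leq \varepsilon$.

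Second, I would invoke the a priori stability estimate for the VPV system in the form appropriate to the chosen indices $s_i=0$, $t_j=1$. Proposition \ref{Prop:estimate} is stated for homogeneous boundary data, but the Douglis--Nirenberg framework cited therein (\cite{AgmonDouglisNirenberg_1964,ChangBo-Nan_1990,Gunzburger_1994}) yields the standard full regularity bound with nonhomogeneous boundary data,
\begin{equation*}
\|V\|_1^2 \;\leq\; C\bigl(\|\mathscr{L}V\|_0^2 + \|\mathscr{R}V\|_{1/2,\Gamma}^2\bigr),
\end{equation*}
valid for all $V$ in the relevant solution space. Applying this estimate to $V=E$ and inserting the identifications above gives
\begin{equation*}
\|E\|_1^2 \;\leq\; C\bigl(\|\mathscr{L}U_{\bm{\theta}} - F\|_0^2 + \|\mathscr{R}U_{\bm{\theta}}\|_{1/2,\Gamma}^2\bigr) \;=\; C\, J(U_{\bm{\theta}}) \;\leq\; C\varepsilon^2.
\end{equation*}
Taking square roots yields $\|U - U_{\bm{\theta}}\|_1 \leq c\varepsilon$ with $c=\sqrt{C}$, and the $L^2$ bound follows from $\|\cdot\|_0 \leq \|\cdot\|_1$.

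The main obstacle is really a bookkeeping one: Proposition \ref{Prop:estimate} is written as a bound on $\|U\|_t$ in terms of only the interior data $\bm{f}$, implicitly assuming homogeneous boundary data, whereas the residual equation for $E$ carries both interior and boundary inhomogeneities. I would either cite the nonhomogeneous version of the Douglis--Nirenberg a priori estimate directly, or reduce to the homogeneous case by a standard lifting: choose $\widetilde{U}\in [H^1(\Omega)]^4$ with $\mathscr{R}\widetilde U = -\mathscr{R}U_{\bm\theta}$ and $\|\widetilde U\|_1 \leq C\|\mathscr{R}U_{\bm\theta}\|_{1/2,\Gamma}$ (a trace inverse), then apply Proposition \ref{Prop:estimate} to $E-\widetilde U$ whose data is now in $L^2(\Omega)$ with homogeneous boundary values, and finally use the triangle inequality. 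Either route gives the same constant-level estimate and completes the proof.
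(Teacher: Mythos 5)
Your proposal is correct and matches the paper's argument: the paper's proof is precisely your ``lifting'' route, defining $\mathscr{R}^{-1}G_\varepsilon$ as a bounded right inverse of the boundary operator, setting $\tilde{U}_{\bm{\theta}} = U_{\bm{\theta}} - \mathscr{R}^{-1}G_\varepsilon$ so that the error satisfies a homogeneous boundary-value problem, applying the a priori stability estimate, and concluding with the triangle inequality and the bounds $\|F_\varepsilon - F\|_0 \leq \varepsilon$, $\|G_\varepsilon\|_{1/2,\Gamma} \leq \varepsilon$. Your primary route (citing the nonhomogeneous Agmon--Douglis--Nirenberg estimate directly) is an equivalent shortcut to the same constant-level bound.
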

\begin{proof}
Denote $\mathscr{L} U_{\bm{\theta}} = F_{\varepsilon}$ and $\mathscr{R}U_{\bm{\theta}}= G_{\varepsilon}$.
From \eqref{Eq:ErrorEstimatesJ}, we have $\|F_{\varepsilon} - F\|_{0}\leq \varepsilon$ and $\|G_{\varepsilon}\|_{1/2, \Gamma}\leq \varepsilon$.
Consider the inverse boundary operator $\mathscr{R}^{-1}:[H^{1/2}(\Gamma)]^2\rightarrow [H^1(\Omega)]^4$, which is linear and bounded such that $ \mathscr{R}\mathscr{R}^{-1}= I$,
where $I$ is the identity operator.
Let ${\tilde{U}_{\bm{\theta}}} = U_{\bm{\theta}} - \mathscr{R}^{-1}G_{\varepsilon}$, we have
\begin{equation}\label{Eq:inverse}
\left \{\begin{split}
&\mathscr{L} \tilde{U}_{\bm{\theta}} = F_{\varepsilon} - \mathscr{L}\mathscr{R}^{-1}G_{\varepsilon} ,\quad {\rm in}\ \Omega,\\
&\mathscr{R} \tilde{U}_{\bm{\theta}} = 0,  \quad {\rm on}\ \Gamma.\\
\end{split}\right.
\end{equation}
Subtracting system \eqref{Eq:inverse} into system \eqref{Eq:Linear_system}-\eqref{Eq:Linear_system_bdc} leads to
\begin{equation}
\left \{\begin{split}
&\mathscr{L} \tilde{U}_{\bm{\theta}} - \mathscr{L}U = F_{\varepsilon} - \mathscr{L}\mathscr{R}^{-1}G_{\varepsilon} - F,\quad {\rm in}\ \Omega,\\
&\mathscr{R} \tilde{U}_{\bm{\theta}} - \mathscr{R}U = 0,  \quad {\rm on}\ \Gamma,\\
\end{split}\right.
\end{equation}
Note that $\mathscr{R}^{-1}\in [H^1(\Omega)]^4$, since $\mathscr{L}$ is a fourth order elliptic operator, we have $\mathscr{L}\mathscr{R}^{-1}G_{\varepsilon}\in [H^{-3}(\Omega)]^4$.
Then by the Lax-Milgram, we have
\begin{eqnarray}
\|\tilde{U}_{\bm{\theta}} -U\|_1^2\leq C^2_{\mathscr{L}^{-1}}\|(F_{\varepsilon}-F) - \mathscr{L}\mathscr{R}^{-1}G_{\varepsilon} \|_{-3}^2,
\end{eqnarray}
Therefore, we have
\begin{eqnarray}
\|U_{\bm{\theta}} -U\|_1 & = &\|U -(U_{\bm{\theta}} - \mathscr{R}^{-1}G_{\varepsilon} ) - \mathscr{R}^{-1}G_{\varepsilon}\|_1\\
 &\leq & \|U - \tilde{U}_{\bm{\theta}}\|_1 + \|\mathscr{R}^{-1}G_{\varepsilon}\|_1 \nonumber\\
 & \leq & C_{\mathscr{L}^{-1}}\|(F_{\varepsilon}-F) - \mathscr{L}\mathscr{R}^{-1}G_{\varepsilon} \|_{-3} +  C_{\mathscr{R}^{-1}}\|G_{\varepsilon}\|_{1/2,\Gamma}.\nonumber
\end{eqnarray}
Since
\begin{eqnarray}
\|F_{\varepsilon}-F\|_{-3} \leq \|F_{\varepsilon}-F\|_{0} \leq \varepsilon,
\end{eqnarray}
and
\begin{eqnarray}
\|\mathscr{L}\mathscr{R}^{-1}G_{\varepsilon}\|_{-3}\leq C_{\mathscr{L}}\|\mathscr{R}^{-1}G_{\varepsilon}\|_1\leq C_{\mathscr{L}}C_{\mathscr{R}^{-1}}\|G_{\varepsilon}\|_{1/2,\Gamma}.
\end{eqnarray}
Thus
\begin{eqnarray}
\|U - U_{\bm{\theta}} \|_{0}\leq \|U - U_{\bm{\theta}} \|_{1}\leq c\varepsilon,
\end{eqnarray}
\end{proof}

The numerical error of the VPVnet method mainly comes from three parts,
the approximation/expressive error caused by the deep neural network $U_{\bm{\theta}}$, the numerical error of evaluating the least square functional/neural network through numerical quadratures, and the algebraic error caused by the optimization algorithms.
As shown in Theorems \ref{Th:convergence_1} and \ref{Th:convergence_2}, we can obtain a neural network such that $\|U_{\bm{\theta}}-U\|\leq \varepsilon$ by minimizing the loss function \eqref{Eq:minimization_VPV}.
On the other side, the Gauss-Legendre quadrature is a classical method that {is} often used for approximating the value of integrals.
The numerical error of evaluating the least square functional can be computed to a desired accuracy
through either uniform or adaptive partition of the domain $\Omega$ and $\Gamma$ \cite{Caizhiqiang2020}.
Note that common methods of optimization such as stochastic gradient descent (SGD) has been employed in this paper.
Although the convergence of the optimization methods depends on many computation factors and is still a hot topic of research,
as we observed in section \ref{Section-numerical-experiments}, the optimization algorithm employed works well for the test cases considered.
\section{Numerical experiments}\label{Section-numerical-experiments}
The purpose of this section is two-fold. First, we shall verify the performance and accuracy of the VPVnet method on some synthetic test cases.
{Second}, we will demonstrate the {capability} of the VPVnet method to approximate non-smooth/singular
solutions of the Stokes' equations, {using} test cases with singular solutions on L-shape domains, and a {lid-driven} cavity problem will be considered.

All the experiments are replicated three times to reduce variability of random initialization of the method of gradient decent and the medians of three training results are reported.
The optimization algorithm is composed of two steps: we first use the Adam optimizer {\cite{Kingma_Adam_2014}} for e.g. 5,000 iterations with self-adaptive learning rates as proposed in {Tensorflow}, then apply the limited-memory BroydenFletcherGoldfarbShanno algorithm with bound constraints (L-BFGS-B) {\cite{BFGS_1989}} to finetune the results, {so that errors from inexact optimization are suppressed}. The training process of L-BFGS-B is terminated automatically based on the increment tolerance.
Note that although both the loss functions for the pressure normal velocity boundary condition \eqref{Eq:loss_pnv} and for the velocity boundary condition \eqref{Eq:loss_vbdc}
provide satisfying numerical approximations, only numerical results obtained with the loss function \eqref{Eq:loss_vbdc} (or \eqref{Eq:loss_discret} in discrete form) is presented in the following, as the velocity boundary condition is most commonly used in practice.
The results for the pressure normal velocity boundary condition will not be present here for simplicity.
\subsection{Smooth test case}\label{subsection:smooth_testcase}
Consider the following test case defined in domains $\Omega =(0,1)^d$, $d=2,3$ such that \eqref{Eq:testcase2D} and \eqref{Eq:testcase3D} are the exact solutions.
The parameter $\nu =1$ in these test cases.
The external source $\bm{f}$ and the Dirichlet boundary data $\bm{g}$ is chosen such that the above functions $\bm{u}$ and $p$ are solutions
to the system \eqref{EQ:Stokes}.

\begin{subequations}
\begin{eqnarray}
&&\bm{u}(x,y) =
\begin{pmatrix}
~~\sin(x)\sin(x)\cos(y)\sin(y)\\
 -\cos(x)\sin(x)\sin(y)\sin(y)\\
\end{pmatrix},
\quad p(x,y) = ~~\cos(x)\cos(y), \label{Eq:testcase2D}\\
&&\bm{u}(x,y,z) =
\begin{pmatrix}
    x + x^2 + xy + x^3y\\
    y + xy + y^2 + x^2y^2\\
    -2z-3xz-3yz-5x^2yz\\
\end{pmatrix},
\quad p(x,y,z) =  xyz + x^3y^3z -\frac{5}{32}. \label{Eq:testcase3D}
\end{eqnarray}
\end{subequations}

\begin{figure}[!h]
\centering
\subfigure[Speed]{
\label{Fig1.sub.1}
\includegraphics [width=0.31\textwidth]{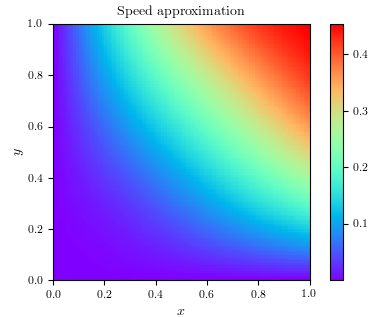}
}
\subfigure[Velocity ]{
\label{Fig1.sub.2}
\includegraphics [width=0.31\textwidth]{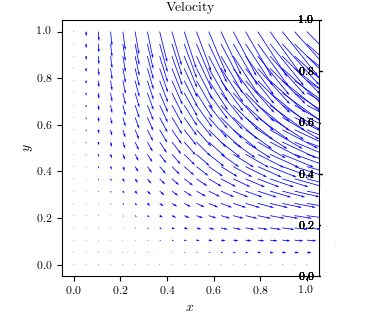}
}
\subfigure[Pressure ]{
\label{Fig1.sub.3}
\includegraphics [width=0.31\textwidth]{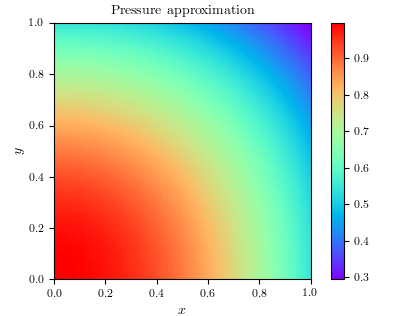}
}
\caption{Numerical results of test case \eqref{Eq:testcase2D} obtained with VPVnet method using $20\times20$ uniformly distributed quadrature points in $\Omega{=(0,1)^2}$.}
\label{fig:testcase 1}
\end{figure}

\begin{figure}[!h]
\centering
\subfigure[Speed]{
\label{Fig1.sub.1_3D}
\includegraphics [width=0.31\textwidth]{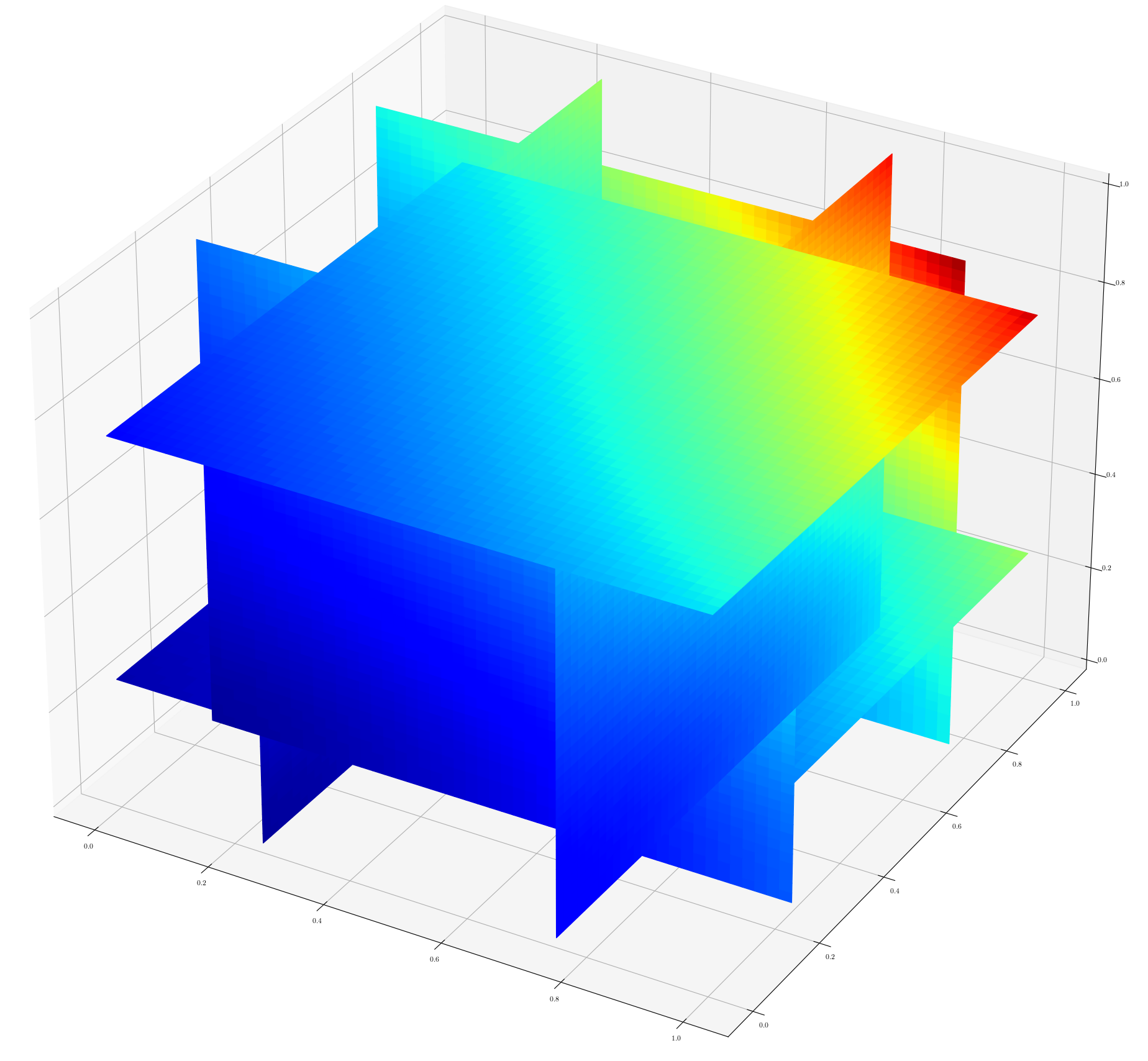}
}
\subfigure[Velocity ]{
\label{Fig1.sub.2_3D}
\includegraphics [width=0.31\textwidth]{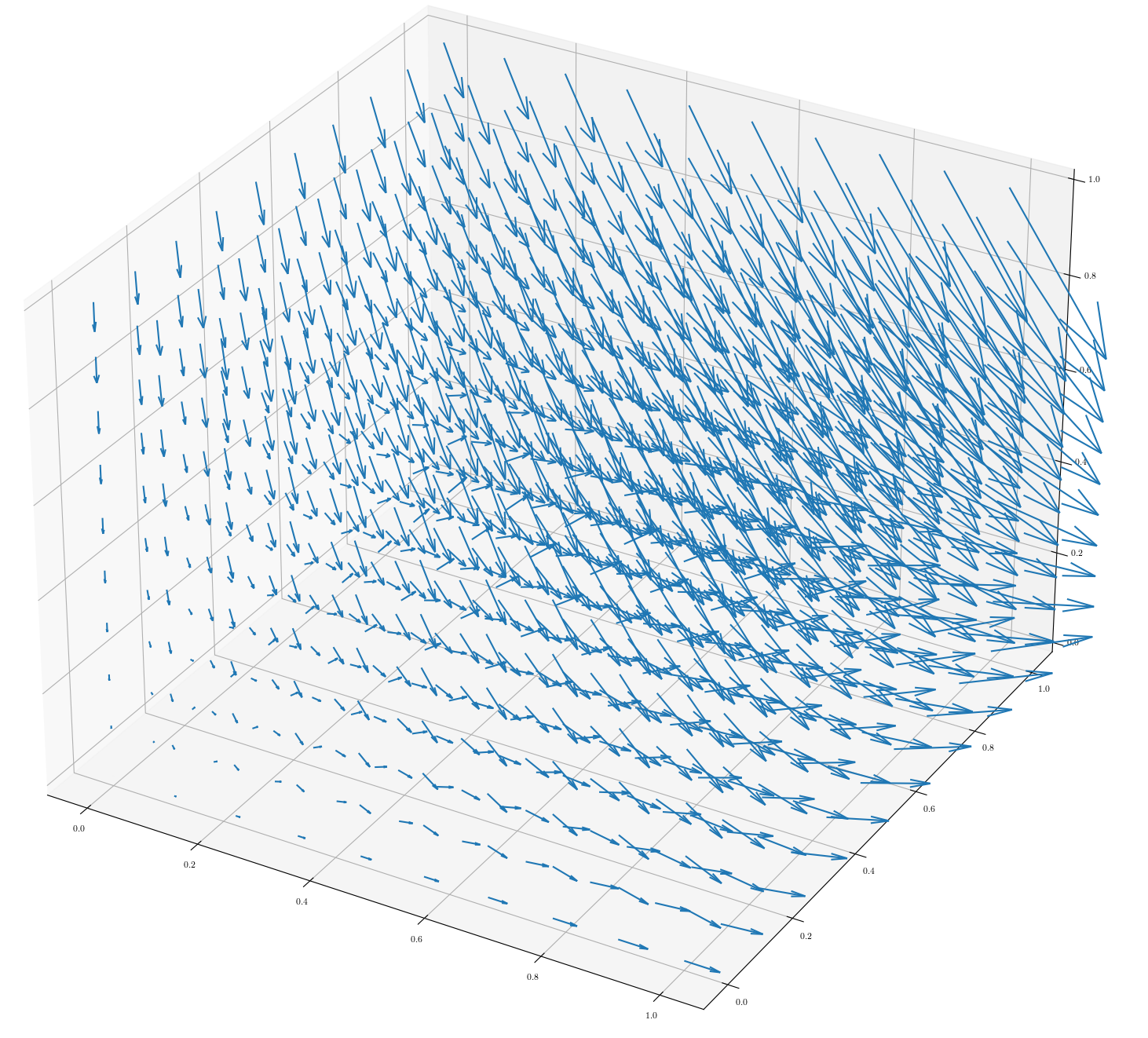}
}
\subfigure[Pressure ]{
\label{Fig1.sub.3_3D}
\includegraphics [width=0.31\textwidth]{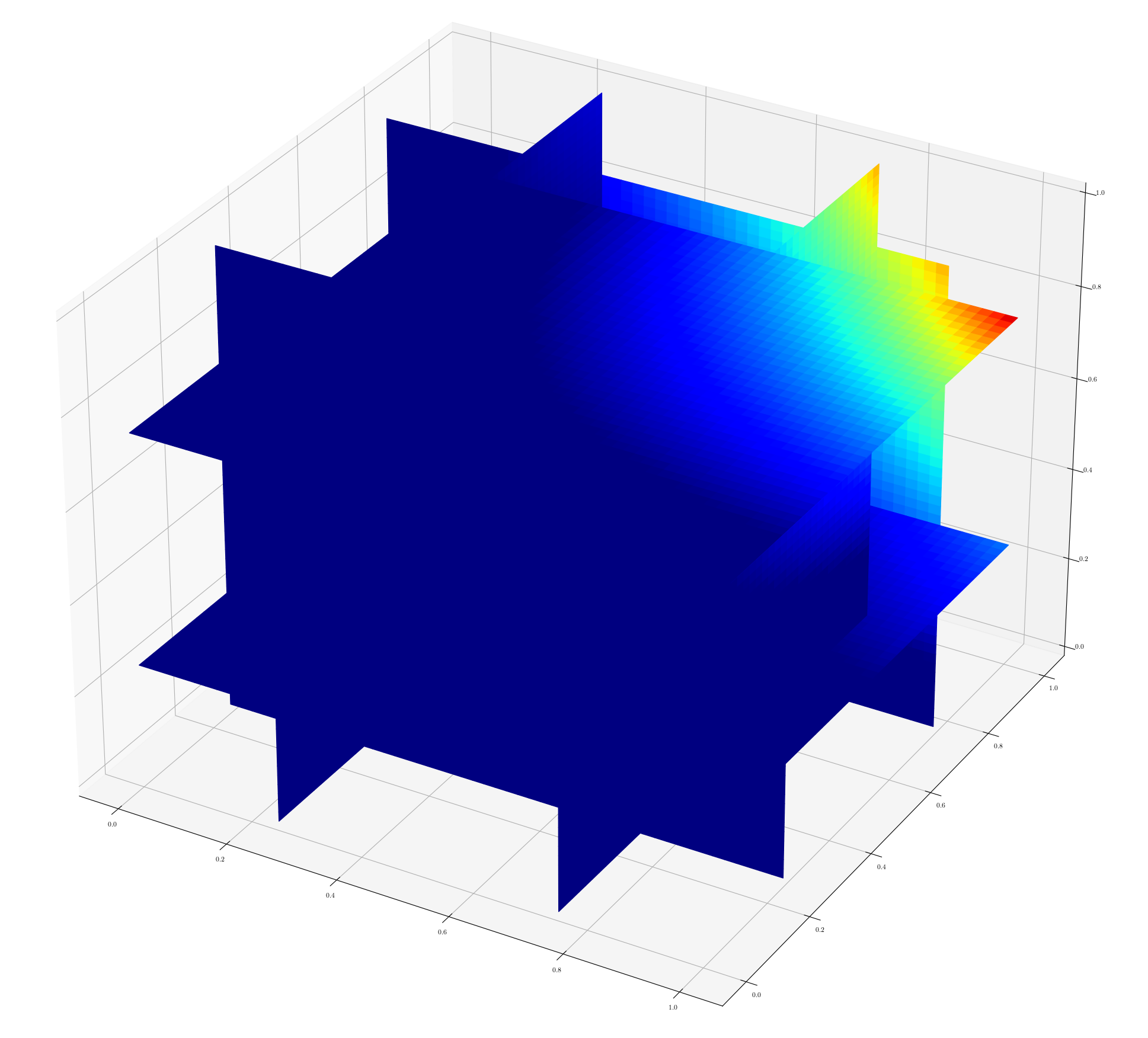}
}
\caption{Numerical results of test case \eqref{Eq:testcase3D} in 3D obtained with VPVnet method using $20\times20\times20$ uniformly distributed quadrature points in $\Omega{=(0,1)^3}$.}
\label{fig:testcase 1_3D}
\end{figure}

We first examine the accuracy of the VPVnet method {by varying} the number of quadrature points and the number of layers and neurons.
We use neural networks of different structures to approximate the solution of the Stokes' problem \eqref{Eq:testcase2D}.
For example, a neural network comprised of $2$ Resnet blocks, i.e. $4$ hidden layers, and each hidden layer admits $8$ neurons,
with total $267$ trainable parameters, is one of the neural networks considered.
For simplicity, we note the neural networks by its number of hidden layers $N_L$ and the number of neurons $N_N$ within each hidden layers, hence the above neural network is noted as a neural network of $4 \times 8$.
We also use neural networks of $8 \times 8$, $4\times16$, $8\times 16$ hidden layers and neurons, with total $555$, $915$, $2003$ trainable parameters, respectively, to approximates the solutions of the model problem \eqref{Eq:testcase2D}.

The discrete loss functions \eqref{Eq:loss_discret} is considered for the VPVnet method.
The quadrature points (QPs) for the evaluation of the loss function is uniform grid data on the domain, one point quadrature rules is considered, which consist of $50\times 50$ (or $20\times 20$) points in total.
Hence we have $h=1/50$ (or $h =1/20$).
And we choose $\alpha =1$.
QPs for the boundary condition of $u$, $v$ is uniformly distributed points on the boundary of $\Omega$ , which contains $50$ (or $20$) points on each edge, i.e $200$ (or $80$) points in total.
An illustration of these QPs is shown in Fig \ref{Fig:TD}.
As mentioned previously, the optimization problem is first computed with 2000 Adam iterations, and followed by at most 5000 L-BFGS-B iterations,
the learning rates are determined by existing self-adaption algorithms in Tensorflow.

\begin{figure}[!h]
\centering
\subfigure[Convergence of loss for test case 5.1(a)]{
\label{Fig5.sub.1}
\includegraphics [width=0.475\textwidth]{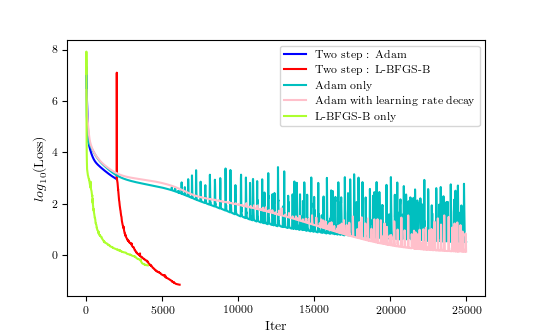}
}
\subfigure[Convergence of loss for test case 5.1(b)]{
\label{Fig5.sub.2}
\includegraphics [width=0.475\textwidth]{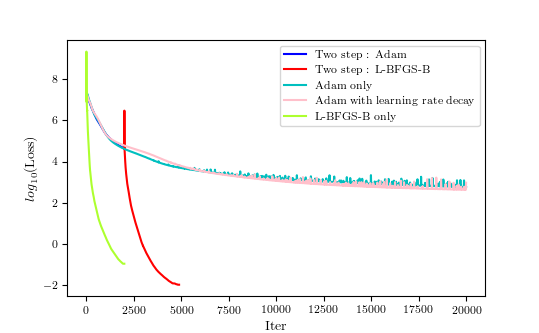}
}
\caption{Convergence of loss function over optimization
iteration with different optimization processes for test case 5.1(a) and 5.1(b), the curves are: $(1)$ Two step optimization : Adam part; $(2)$ Two step optimization : L-BFGS-B part;$(3)$ Adam optimizer only; $(4)$ Adam optimizer with a hyperparameter of learning rate $5e-4$ with exponential decay rate 0.98; $(5)$ L-BFGS-B optimizer only.}\label{fig5:test1_loss_convergence2}
\end{figure}

Figure \ref{fig5:test1_loss_convergence2} is the convergence of loss function over different optimization iterations.
As we can observe both in Fig. \ref{Fig5.sub.1} and \ref{Fig5.sub.2}, the two-step approach allow a much faster convergence and even a better precision than just using Adam only.
This is because BFGS uses the Hessian matrix (curvature in highly
dimensional space) to calculate the optimization direction and provides more accurate results.
However, if BFGS is used directly
without using the Adam optimizer, the optimization can rapidly converge to a local minimum \cite{Markidis_2021}. For this
reason, the Adam optimizer is used first to avoid local minima, and then the solution is refined by BFGS.
Here we use the L-BFGS-B method instead of the BFGS method in our work.

\begin{figure}[!h]
\centering
\includegraphics [width=0.5\textwidth]{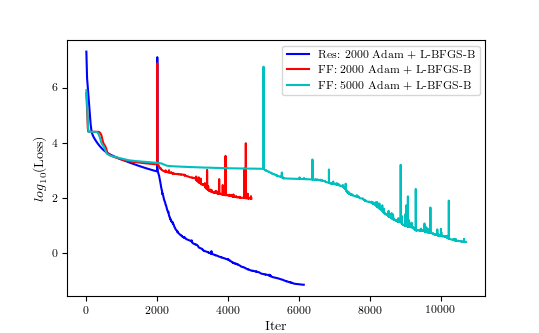}
\caption{Convergence of loss function over optimization
iterations obtained with different neural network structures for test case 5.1(a): $(1)$ neural network with Resnet blocks, 2000 Adam optimization iteration plus L-BFGS-B optimization process; $(2)$ feedforward neural network, 2000 Adam optimization iteration plus L-BFGS-B optimization process; $(3)$ feedforward neural network, 5000 Adam optimization iteration plus L-BFGS-B optimization iterations.}\label{fig:test1_loss_convergence_resnet}
\end{figure}

Figure \ref{fig:test1_loss_convergence_resnet} show the convergence of loss function over optimization
iteration obtained with two different neural network structures for test case 5.1(a), the first is neural network with Resnet (Res) blocks, the second is feedforward (FF) neural network, note that these two neural networks have the same number of 8 hidden layers and each hidden layers have the same number of 20 neurons, except that the neural network with Resnet blocks has a residual connection every two layers. We can see that the neural network with Resnet blocks converges faster and gives a better precision than classical feedforward network. Hence neural network with Resnet blocks is adopted for all the subsequent test cases in this work.

\begin{table}[!h]
\scriptsize
\begin{center}
\caption{Numerical errors of the VPVnet method for test case \eqref{Eq:testcase2D}.}\label{table1}
\begin{tabular}{|c|ccc|ccc|}
\hline
\tiny{Network}&\multicolumn{6}{|>{\columncolor{green!15}}c|}{ VPVnet method}\\
\cline{2-7}
\tiny{$N_{L}N_N$}&\multicolumn{3}{|c|}{ $20\times 20$ Quadrature points}&\multicolumn{3}{|c|}{ $50\times 50$ Quadrature points}\\
\cline{2-7}
 & $e_u$ & $e_v$ & $e_p$ & $e_u$ & $e_v$ & $e_p$  \\
\hline
$4\times 8$   &   7.88e-4 &  8.08e-4 &  1.36e-2  & 7.27e-4 &    7.58e-4 &  5.16e-3  \\
$8\times 8$   &   8.35e-4 &  8.26e-4 &  6.62e-3  & 5.09e-4 &    5.45e-4 &  3.31e-3  \\
$4\times 16$  &   6.58e-4 &  5.50e-4 &  1.02e-2  & 3.46e-4 &    4.47e-4 &  5.71e-3  \\
$8\times 16$  &   5.24e-4 &  5.49e-4&   4.56e-3  & 4.34e-4 &    4.99e-4 &  5.51e-3  \\
\hline
\end{tabular}
\end{center}
\end{table}

The numerical approximations obtained with the VPVnet method (using $sinusoid$ activation function) for the speed, velocity and pressure are plotted in Figure \ref{fig:testcase 1}.
The $L_2$ numerical errors of each variable $u,v,p$ are presented in Table \ref{table1}.
We observe that for this smooth test case, we can have a rather good approximation with a small number of hidden layers and neurons.
Although the numerical error varies due to the variability of random initialization of the parameters, we can observe that by using either more hidden layers, more number of neurons or more quadrature points, a tendency of better precision can be observed within certain limits.
\begin{table}[!h]
\scriptsize
\begin{center}
\caption{Numerical error of the VPVnet method for test case \eqref{Eq:testcase2D} using different activation function, a neural network of $8\times 16$ hidden layers and neurons, and $50\times 50$ quadrature points.}\label{table1.2}
\begin{tabular}{|c|ccc|}
\hline
\tiny{Activation}&\multicolumn{3}{|>{\columncolor{green!15}}c|}{ VPVnet method}\\
\cline{2-4}
\tiny{function} & $e_u$ & $e_v$ & $e_p$    \\
\hline
$Sin$      &   4.34e-4 &  4.99e-4  &  5.51e-3     \\
$Tanh$     &   5.11e-4 &  5.20e-4  &  9.98e-3     \\
$Sigmoid$  &   3.01e-3 &  3.36e-3  &  1.59e-2    \\
\hline
\end{tabular}
\end{center}
\end{table}

Table \ref{table1.2} shows the $L_2$ numerical errors using different activation functions.
The errors are obtained using a neural network with $8\times 16$ hidden layers and neurons, and $50\times 50$ quadrature points.
The results obtained with activation functions $Sin(x)$, $Tanh(x)$ and $Sigmoid(x)$ are all of satisfying precision.
We remark that the one of most commonly used activation function $ReLU$ is not considered here,
since the function is not differentiable everywhere,
while {first-order} derivative is presented in the loss functions and we use the automatic differentiation, i.e {\texttt{tf.gradient()}} to approximate the derivative,
which will cause problems.
A possible remedy to this problem might be using $max(0,x^3)$ instead of $ReLU:=max(0,x)$ as proposed in \cite{E_DRM_2018}.
Or we can approximate the derivative by using the finite difference quotient, as did by Cai $et$ $al.$ in \cite{Caizhiqiang2020}.
The {choice} of finite difference quotient for the approximation of the derivative of different variable in $2D$ or $3D$ is a question that needs careful consideration.
For the sake of clarity, we avoid such investigations here.
In the rest of this paper, without specification, we will use $Sin(x)$ as the activation function.

\begin{figure}[!h]
\centering
\subfigure[Sampling points, speed approximation and pressure approximation for 2D test case 5.1(a)]{
\label{Figmeshless.sub.1}
\includegraphics [width=0.28\textwidth]{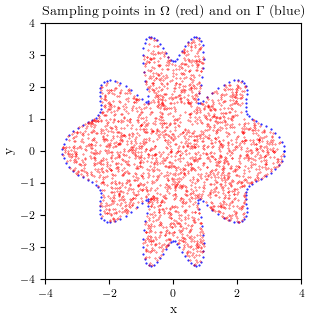}
\includegraphics [width=0.30\textwidth]{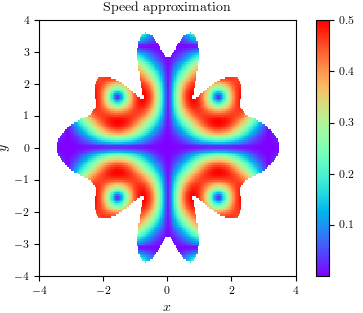}
\includegraphics [width=0.325\textwidth]{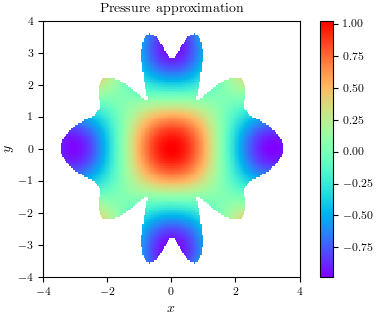}
}
\subfigure[Sampling points on the boundary (left) and inside the domain (right) for 3D test case 5.1(b)]{
\label{Figmeshless.sub.2}
\includegraphics [width=0.475\textwidth]{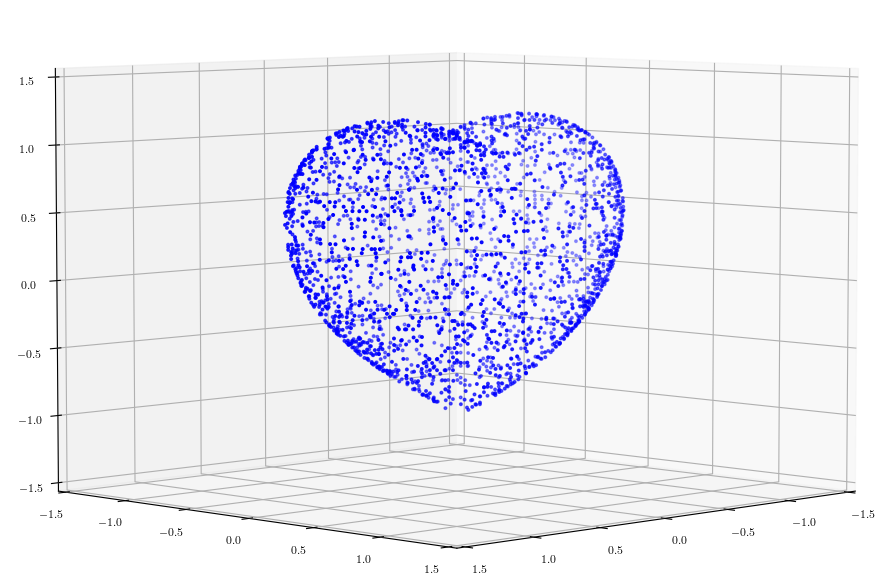}
\includegraphics [width=0.475\textwidth]{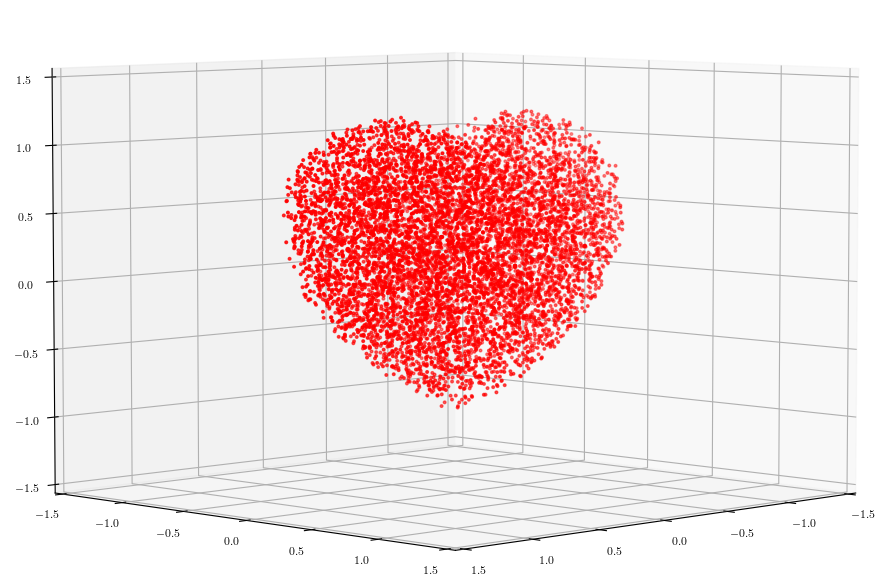}
}
\subfigure[Speed approximation and pressure approximation for 3D test case 5.1(b)]{
\label{Figmeshless.sub.3}
\includegraphics [width=0.475\textwidth]{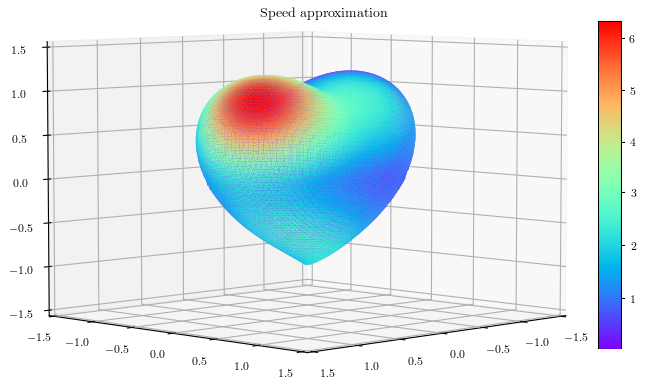}
\includegraphics [width=0.475\textwidth]{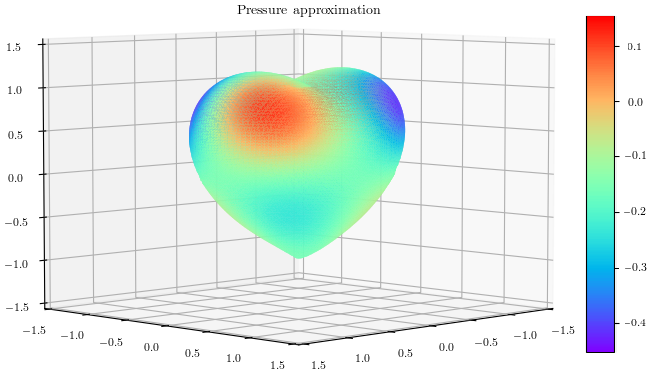}
}
\caption{Numerical results of  test case \eqref{Eq:testcase2D} and test case \eqref{Eq:testcase3D} obtained with VPVnet method using random sampling points in irregular domains}\label{figmeshless:test1_test2}
\end{figure}
We use neural networks of $8\times 16$ hidden layers and neurons to approximate the solution of the 3D test case \eqref{Eq:testcase3D}.
The quadrature points for the evaluation of the loss function is uniform grid data in the cube domain, which consist of $20\times 20\times 20$ points in total.
Note that BFGS instead of the L-BFGS-B algorithm is used to finetune the results obtained with the Adam optimizer in this 3D test case, as a better precision can be achieved by using the former.
The $L_2$ numerical errors obtained with the VPVnet method (using $sinusoid$ activation function) is $1.24e-5$, $1.37e-5$, $1.37e-5$, $4.85e-5$, $4.95e-3$, for variable $(\bm{u},p)$, where $\bm{u}=(u^{(1)}, u^{(2)}, u^{(3)})^T$ in 3D, respectively.
Similar precisions are reached for $tanh$ and $sigmoid$ activation functions, which will not be listed here for simplicity.
The numerical approximations obtained with the VPVnet method for the speed, velocity and pressure are plotted in Figure \ref{fig:testcase 1_3D}.
The numerical results show that VPVnet method is accurate and efficient in 3D.

The test cases \eqref{Eq:testcase2D} and \eqref{Eq:testcase3D} have been performed on more complex geometries using random sampling points.
In this case, the loss function \eqref{Eq:loss_discret} is employed.
The test case \eqref{Eq:testcase2D} is  preformed on a butterfly-like domain $\Omega$.
The boundary $\Gamma$ of the domain $\Omega$ is defined by the following equation \cite{TangLi_2017}:
\begin{eqnarray*}
\Gamma(x,y) =\left\{{(x,y)}^T\in R^2:\; x(\theta) = r(\theta)cos(\theta),\; y(\theta) = 1.4r(\theta)sin(\theta),\; \theta \in [0,2\pi]\right\},
\end{eqnarray*}
where $r(\theta) = 2\sqrt{cos^2(5\theta) + cos^2(2\theta) + cos^2(\theta)}$.
We used $200$ and $2500$ random sampling points (cf Fig. \ref{Figmeshless.sub.1}), respectively,  on the boundary and inside of the computation domain.
As for the 3D test case \eqref{Eq:testcase3D},
the domain boundary is described by the zero iso-value of the function $\phi$ \cite{HeMulin_2020}:
\begin{eqnarray*}
\Gamma(x,y,z) =\left\{{(x,y,z)}^T\in R^3:\; \phi(x,y,z) =\left(x^2 + \tfrac{9}{4}y^2 +z^2-1\right)^3 -x^2z^3 - \tfrac{9}{80}y^2z^3 =0\right\}.
\end{eqnarray*}
$2500$ and $8000$ random sampling points are employed (cf Fig. \ref{Figmeshless.sub.3}), respectively,  on the boundary and inside of the computation domain.
Convergence can be easily obtained for both test cases using the same optimization processes as previously used.
The numerical results (cf Fig. \ref{figmeshless:test1_test2}) show that the method is very flexible on complex geometries.
The $L_2$ numerical errors obtained with the VPVnet method (using $sinusoid$ activation function) is
$3.20e-03$, $3.16e-03$ and $3.34e-02$ for variable $(\bm{u},p)$, where $\bm{u}=(u^{(1)}, u^{(2)})^T$ in 2D, respectively (and $5.88e-05$, $1.30e-04$, $ 2.58e-05$, $7.77e-3$ for $(\bm{u},p)$ in 3D).
The precision can be further improved by adaptive sampling \cite{Wight_2020} which is the subject for future study.
\subsection{Pressure-{r}obust test case}\label{subsec:pressurerobust}
In this example, let $\Omega = (0, 1)^2$, and choose the
proper right-hand side function such that the following functions satisfy system \eqref{EQ:Stokes}.
\begin{eqnarray}\label{Eq:robust}
\bm{u}(x,y) =
\begin{pmatrix}
-e^x(y\cos(y)+\sin(y))\\
 ~~e^x y\sin(y)\\
\end{pmatrix},
\quad p(x,y) = 2e^x\sin(y).
\end{eqnarray}
Since $ -\Delta \bm{u} = (-2e^x\sin (y), - 2e^x\cos (y))^t $, it is easy to check
that when $\nu = 1$, we have a homogeneous equation with $\bm{f} = 0$.
When $\nu = 0$, $\bm{f}$ is a nonzero function.
It is well known that the classical finite element methods, such as the mini element, Taylor-Hood element, etc,
suffer from a common lack of robustness when $\nu \ll 1$.
The velocity error can become {extremely} large.
It is also called poor mass conservation sometimes, since for $H^1$-conforming mixed methods such large velocity errors are accompanied by large divergence errors.
In this test case, we will report the robustness results of the VPVnet method.

\begin{figure}[!h]
\centering
\subfigure[$\nu=1e-2$]{
\label{Fig2.sub.1}
\includegraphics [width=0.31\textwidth]{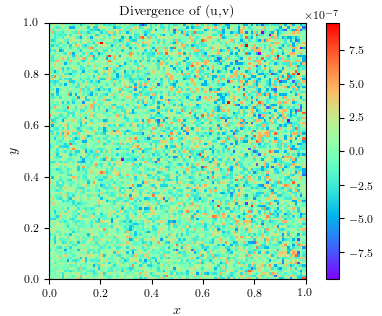}
}
\subfigure[$\nu=1e-4$]{
\label{Fig2.sub.2}
\includegraphics [width=0.31\textwidth]{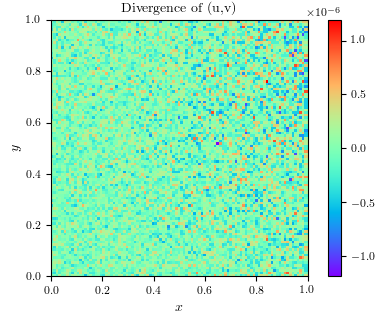}
}
\subfigure[$\nu=1e-6$]{
\label{Fig2.sub.3}
\includegraphics [width=0.31\textwidth]{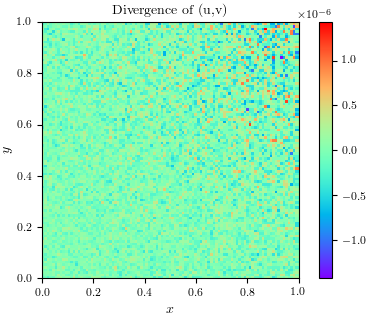}
}
\caption{Numerical results of divergence of $\bm{u}$ for test case \eqref{Eq:robust} obtained with VPVnet method using $50\times50$ uniformly distributed quadrature points in $\Omega$.}
\label{fig:testcase 2}
\end{figure}

\begin{table}[!h]
\scriptsize
\begin{center}
\caption{Numerical error of the VPVnet method with different value of $\nu$ for {test case} \eqref{Eq:robust}, using a neural network with $8\times 16$ hidden layers and neurons, and $20\times 20$ and $50\times 50$ quadrature points.}\label{table2}
\begin{tabular}{|c|ccc|ccc|}
\hline
&\multicolumn{6}{|>{\columncolor{green!15}}c|}{ VPVnet method}\\
\cline{2-7}
$\nu$&\multicolumn{3}{|c|}{ $20\times 20$ Quadrature points}&\multicolumn{3}{|c|}{ $50\times 50$ Quadrature points}\\
\cline{2-7}
 & $e_u$ & $e_v$ & $e_p$ & $e_u$ & $e_v$ & $e_p$  \\
\hline
$1e-2$   & 4.18e-4 &   1.35e-3  & 1.46e-3&   1.60e-4 &  5.86e-4   & 2.62e-3  \\
$1e-4$   & 2.06e-4 &   5.94e-4  & 1.89e-3&   2.26e-4 &  7.78e-4   & 2.85e-3  \\
$1e-6$   & 3.22e-4 &  9.23e-04  & 8.68e-4&   3.98e-4 &  1.18e-3   & 2.09e-3  \\
\hline
\end{tabular}
\end{center}
\end{table}
The numerical experiments are carried out for $\nu = 1e-2,\; 1e-4, \;1e-6$.
The numerical results are obtained using a neural network with $8\times 16$ hidden layers and neurons, and $20\times 20$ and $50\times 50$ quadrature points.
The optimization problem is computed with 2000 Adam and at most 5000 L-BFGS-B iterations, respectively.
The $L_2$ numerical errors obtained with the VPVnet method of variable $u,v,p$ are presented in Table \ref{table2}.
We obtain very good approximations of the solutions for any value of $\nu$, including the value $\nu$ very small.
The divergence of the velocity $div \bm{u} =u_x + v_y$ for different values of $\nu$ is plotted in Figure \ref{fig:testcase 2}.
Hence the VPVnet method is divergence-free and pressure-robust.
\subsection{Smooth test case on L-shape domain}
For further examination, the more complicated domain, L-shape domain, is considered to
validate in this research.
The domain is defined by $\Omega = (-1,1)^2 \backslash([0,1] \times [-1,0])$.
It is called the backward facing step domain of flow problems.
The parameter $\nu$ is of value $1$ for all domain, $\nu = 1$.
The test case is of exact solution as shown in Eq. \eqref{Eq:robust}.
The same test case has been considered in \cite{Vu-Huu_2020} and many other references.

\begin{figure}[!h]
\centering
\subfigure[Speed]{
\label{Fig3.sub.1}
\includegraphics [width=0.30\textwidth]{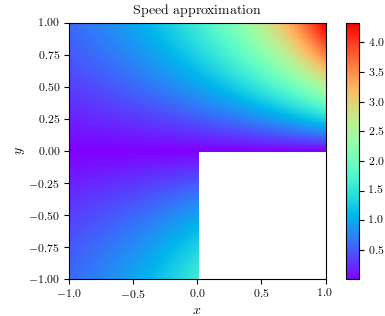}
}
\subfigure[Pressure]{
\label{Fig3.sub.2}
\includegraphics [width=0.30\textwidth]{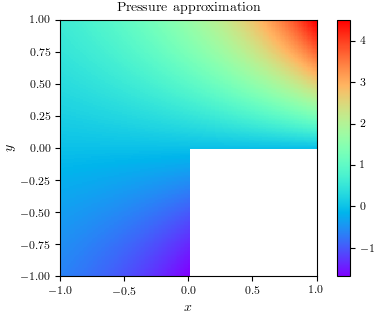}
}
\subfigure[Pressure plot in 3D ]{
\label{Fig3.sub.3}
\includegraphics [width=0.30\textwidth]{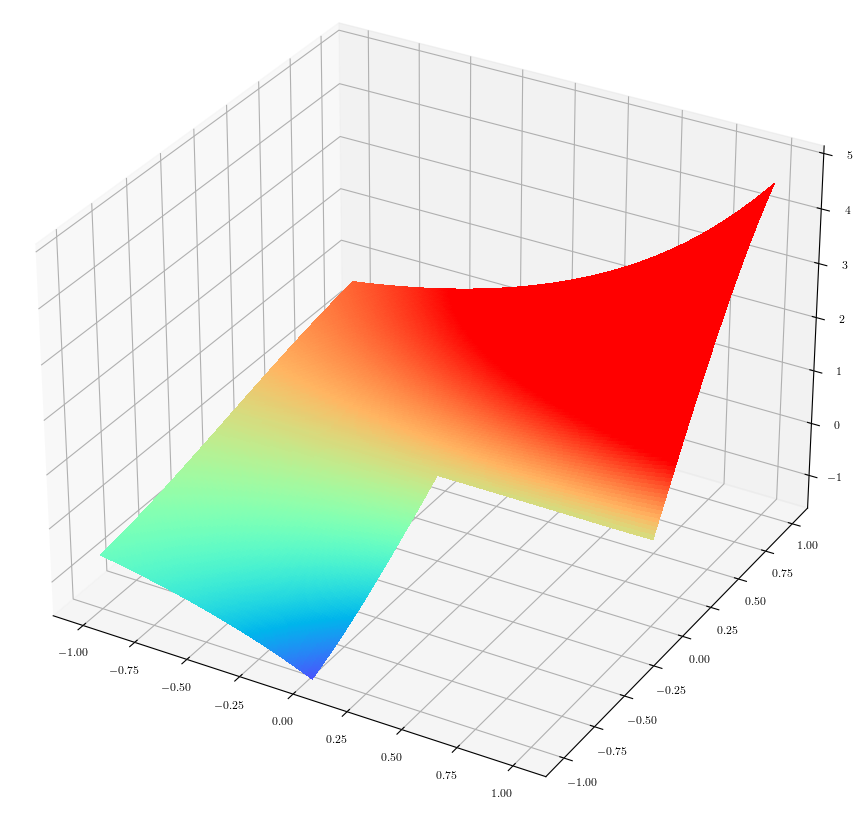}
}\\
\subfigure[Velocity ]{
\label{Fig3.sub.4}
\includegraphics [width=0.30\textwidth]{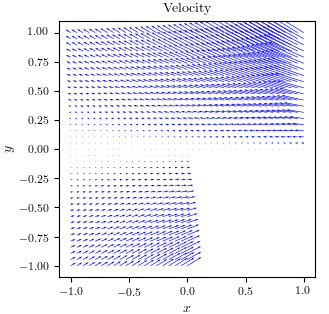}
}
\subfigure[Streamline ]{
\label{Fig3.sub.5}
\includegraphics [width=0.35\textwidth]{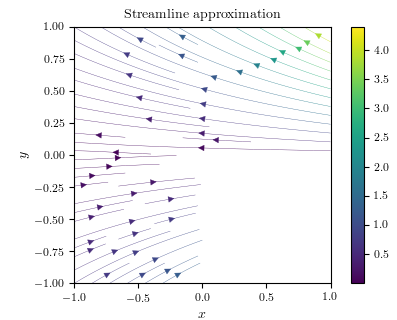}
}
\caption{Numerical results of test case \eqref{Eq:robust} on L-shape domain obtained with VPVnet method using nearly $50\times50\times3$ uniformly distributed quadrature points in $\Omega$.}
\label{fig:testcase 3}
\end{figure}
We obtain good approximations for the test case on L-shape domain with the VPVnet method.
Note that the test case has the same exact solution as \eqref{Eq:robust} in {subsection \ref{subsec:pressurerobust}}, hence similar computation configuration ha{s} been used and the numerical errors are not shown here again.
The numerical approximations obtained with the VPVnet method for the speed, velocity, pressure and the streamline are plotted in Figure \ref{fig:testcase 3}.
\subsection{Singular test case on L-shape domain}
{Hereafter}, we consider a singular {solution} on the L-shaped domain $\Omega:=(-1,1)^2$$\backslash$$[0,1)\times(-1,0]$.
The test case is proposed by Verf{\"u}rth in \cite{Verfurth_1996} and has been widely used in the literature \cite{ChernovMarcatiMascotto_2020,LiRuo_2019}.
Let $\omega=3\pi/2$, ${\delta} = 0.5444837$.
Given $(r, \theta)$ the polar coordinates at the re-entrant corner $(0,0)$, introduce the auxiliary function
\begin{equation}
\psi(r,\theta) = \frac{\sin((1+{\delta})\theta)\cos({\delta}\omega)}{1+{\delta}}-\cos((1+{\delta})\theta) - \frac{\sin((1-{\delta})\theta)\cos({\delta}\omega)}{1-{\delta}} + \cos((1-{\delta})\theta).
\end{equation}
The singular solution we approximate is
\begin{equation}\label{L-shape-singular}
\left \{\begin{split}
&u(x,y) =  r^{{\delta}}\left((1+{\delta})\sin(\theta)\psi(\theta) + \cos(\theta)\psi'(\theta)\right),\\
&v(x,y) =  r^{{\delta}}\left(\sin(\theta)\psi'(\theta)-(1+{\delta})\cos(\theta)\psi(\theta)\right),\\
&p(x,y) =  r^{{\delta}-1}\left((1+{\delta})^2\psi'(\theta) + \psi^{(3)}(\theta)\right)/(1-{\delta}).
\end{split}\right.
\end{equation}
The Stokes equation is homogeneous with this solution, i.e $\bm{f}=0$.
Moreover, the Dirichlet conditions are homogeneous along the edges except at the re-entrant corner. 
We emphasize that the pair $(\bm{u}, p)$ is analytical in $\Omega\backslash(0,0)$, but $\nabla \bm{u}$ and $p$ are singular at the origin. Especially, $\bm{u}\notin [H^2]^2(\Omega)$
and $p \notin H^1(\Omega)$. This solution reflects perfectly the typical behavior of the solution of the Stokes problem near a reentrant corner.
In general, for solution $(\bm{u},p)$ to the Stokes problem in a non-convex domain $\Omega$,
we can expect $\bm{u} \in [H^s(\Omega)]^2$ and $p \in H^{s-1}$ for $s<1+{\delta}$.
Hence the solution $\bm{u}$ is in $C^0$, and theoretically, the neural networks is capable of approximating a $C^0$ function at any error.
It is expected by construction that the VPVnet method can capture such singular solutions, which is confirmed by the following numerical results.

\begin{figure}[!htbp]
\centering
\subfigure[Exact(left) \& approximate(right) pressure for domain $(-1.0,1.0)^2$]{
\label{Fig4.sub.11}
\includegraphics [width=0.3250\textwidth]{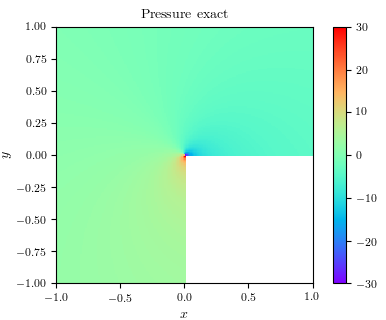}
\includegraphics [width=0.3250\textwidth]{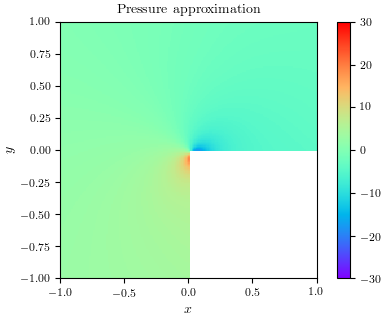}
}\\
\subfigure[Exact(left) \& approximate(right) pressure(zoom) domain $(-0.15,0.15)^2$]{
\label{Fig4.sub.12}
\includegraphics [width=0.3250\textwidth]{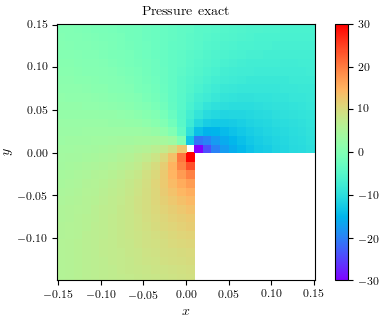}
\includegraphics [width=0.3250\textwidth]{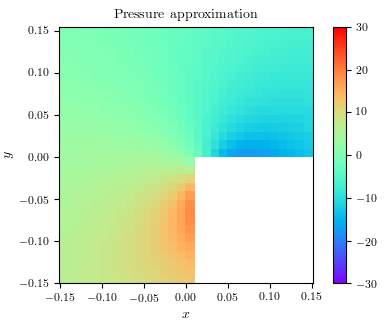}
}\\
\subfigure[Exact \& approximate pressure plot in 3D ]{
\label{Fig4.sub.2}
\includegraphics [width=0.3250\textwidth]{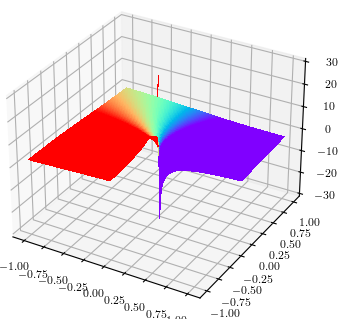}
\includegraphics [width=0.3250\textwidth]{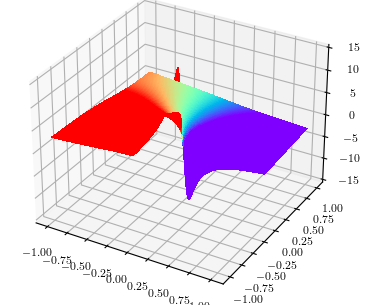}
}\\
\subfigure[Speed]{
\label{Fig4.sub.3}
\includegraphics [width=0.315\textwidth]{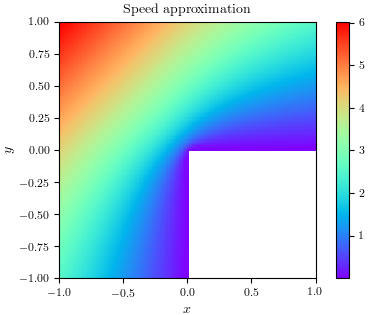}
}
\subfigure[Velocity ]{
\label{Fig4.sub.4}
\includegraphics [width=0.30\textwidth]{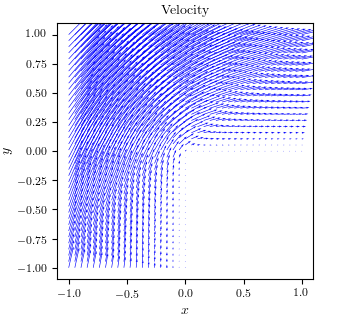}
}
\subfigure[Streamline ]{
\label{Fig4.sub.5}
\includegraphics [width=0.315\textwidth]{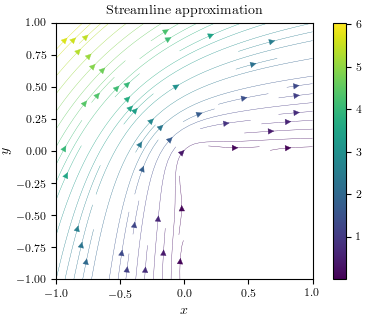}
}

\caption{Numerical results of test case \eqref{L-shape-singular} obtained with VPVnet method using $50\times50\times3$ quadrature points in $\Omega$.}
\label{fig:testcase 4}
\end{figure}

\begin{table}[!h]
\scriptsize
\begin{center}
\caption{Error of the VPVnet method for {test case} \eqref{L-shape-singular} using  $8\times 16$ hidden layers and neurons.}\label{table1.4}
\begin{tabular}{|c|ccc|}
\hline
\tiny{Quadrature}&\multicolumn{3}{|>{\columncolor{green!15}}c|}{ VPVnet method}\\
\cline{2-4}
\tiny{points} & $e_u$ & $e_v$ & $e_p$    \\
\hline
$20\times 20 \times 3$    &   2.75e-2  &  2.22e-2     &   5.21e-1  \\
$50\times 50 \times 3$    &   6.45e-3  &  5.92e-3     &   1.96e-1  \\
$100\times 100 \times 3$  &   8.60e-3  &  8.63e-3     &   3.17e-1  \\
\hline
\end{tabular}
\end{center}
\end{table}

The computation configuration is as follows:
The parameters $\alpha=1$ and a neural network with $12 \times 16$ hidden layers and
neurons is employed for all experiments in this test case.
Uniform partitions of size $20\times 20\times3$, $50\times 50\times3$ and $100\times 100\times3$  are considered.
(Note that we use $*\times*\times 3$ QPs because the L-shape domain is composed of 3 square domain.)
One point quadrature rules is employed for all cases.
Hence we have $h=1/20$ (or $h =1/50$, $h =1/100$) for the uniform partitions.
As for the boundary condition of $u$, $v$, the partition is in accordance with the partitions interior domain.
For the optimization, $2000$ Adam iteration is used, while more L-BFGS-B iterations is needed (less than 50000 iterations), respectively.

The numerical {solutions} obtained with the VPVnet method for the speed, velocity, pressure and the streamline, and the exact solution for the pressure are are plotted in Figure \ref{fig:testcase 4}.
The numerical results in Table \ref{table1.4} clearly show that the VPVnet method can provide good approximations for $\bm{u} \in [H^s(\Omega)]^2$, $2>s>1$.
While the errors obtained with the VPVnet method for $p \in H^{s-1}$, $2>s>1$ {are} much larger, due to the strong pressure singularities near the re-entrant corner $(0,0)$, which can also be observed in Figure \ref{Fig4.sub.11}, \ref{Fig4.sub.12} and \ref{Fig4.sub.2}.
The numerical errors can be reduced by refinement within a certain limit, then it will not get any better, {indicating} more efforts {are} needed to reach a satisfying precision.
Recall that only smooth activation function is considered here in this paper, and it has been shown in \cite{Caizhiqiang2020} that the piecewise activation functions, e.g. $Leaky$ $ReLU$ gives a better performance than the smooth activation functions for a interface problem,
hence it would be interesting to try continuous piecewise activation functions as remarked in subsection \ref{subsection:smooth_testcase}, which is subject to further investigations.
\subsection{Lid-driven cavity test cases}
The lid-driven cavity test case has been used as a benchmark problem for many numerical methods due to its
simple geometry and complicated flow behaviors \cite{Benchmarkliddriven1998}.
The computation domain is $\Omega=(0,1)^2$ in 2D.
And the Dirichlet boundary condition is given as $\bm{u}_{\Gamma} =\bm{g}=(1, 0)^T $ for $y = 1$ and
$\bm{g} = (0, 0)^T$ on the rest of the boundary.
\begin{figure}[!h]
\centering
\subfigure[Quadrature points in 2D]{\label{QPs_LDC_2D}
\includegraphics [width=0.35\textwidth]{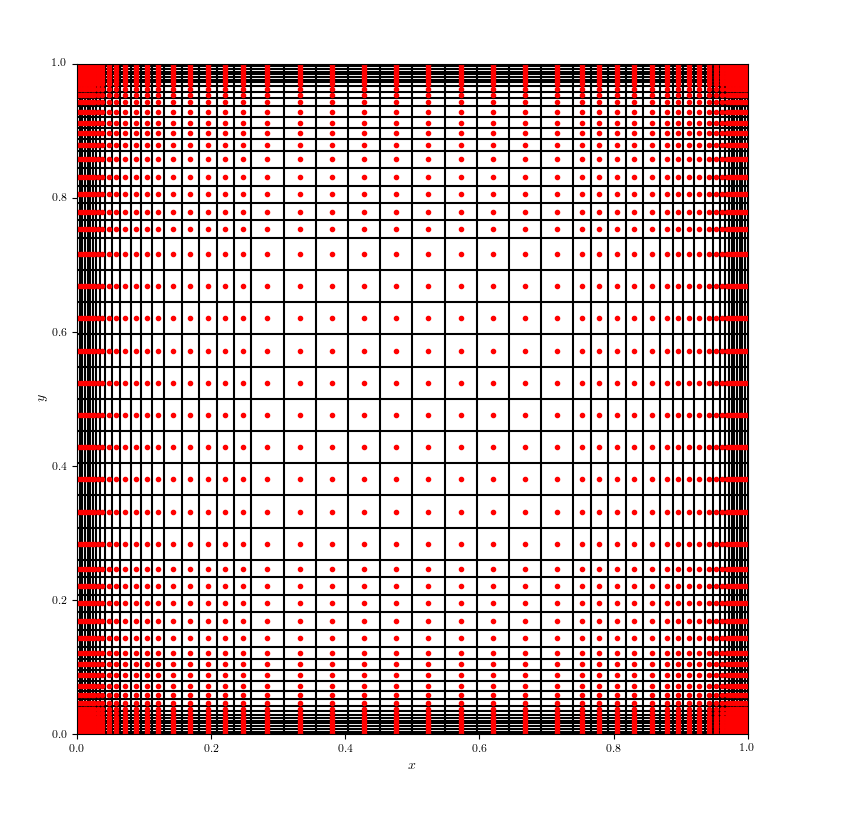}}\hskip20pt
\subfigure[Quadrature points in 3D]{\label{QPs_LDC_3D}
\includegraphics [width=0.4\textwidth]{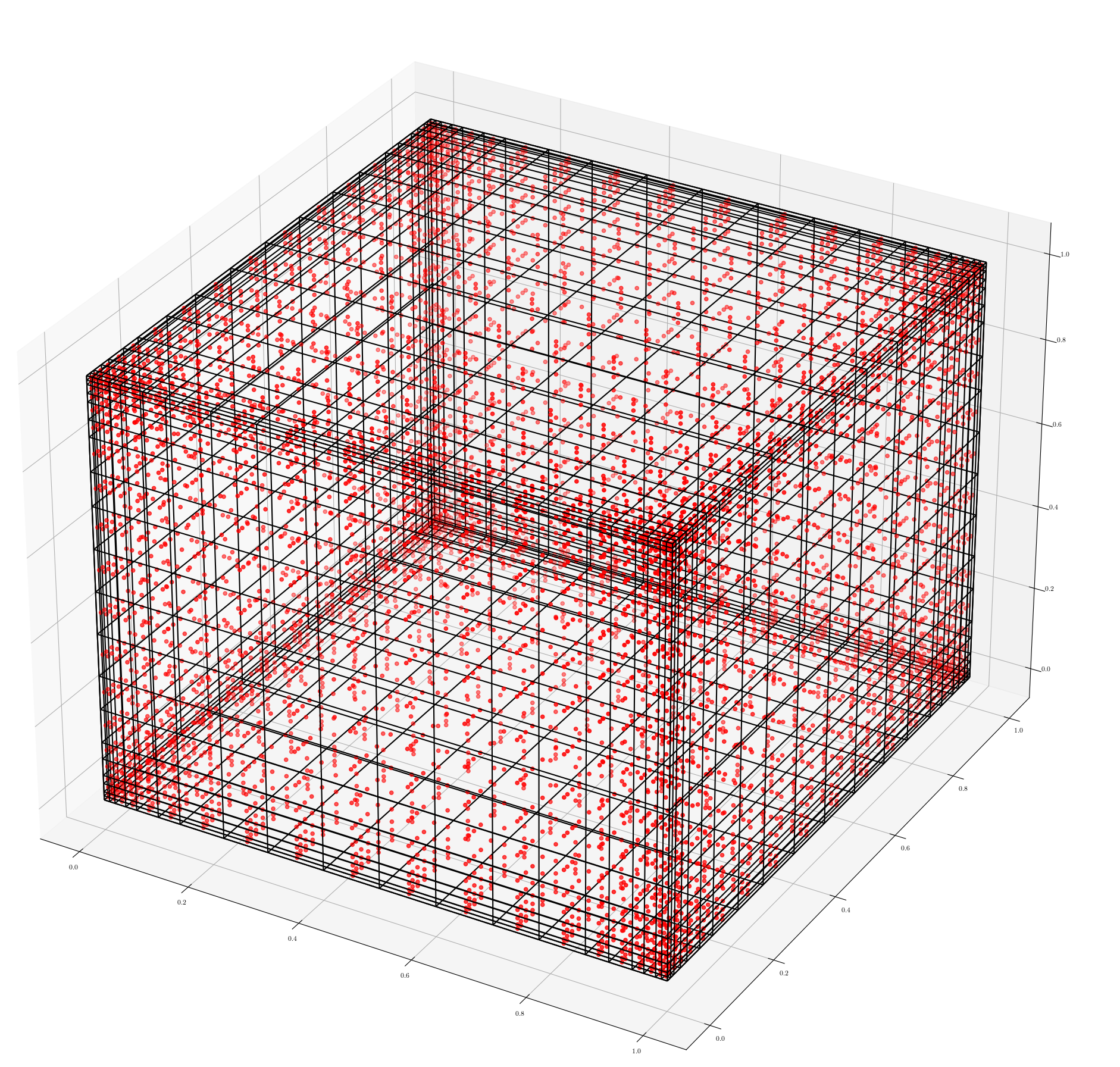}}
\caption{Local refined quadrature points of {lid-driven} cavity test case used for VPVnet method.}
\label{fig:testcase 5.1}
\end{figure}

The main difficulty of this problem comes from the presence of singularities at corners of the domain.
The pressure and the vorticity are not finite at the two top corners of the domain.
Singularities also present at the corners $(0,0)$ and $(1,0)$, where the second derivative of the velocity is unbounded, which is weaker than the previous ones.
By calculating the norm, one can see that $\bm{g} \in H^{1/2-\delta}(\partial\Omega, R^2)$ for all $0<\delta<1/2$
but $\bm{g}\notin H^{1/2}$, hence the velocity is not in $H^1$.
Thus the usual variational formulation for the Stokes' problem does not apply for the {lid-driven} cavity {problems}.
However, this test case is used in many papers to test finite elements methods and the discontinuous boundary condition is usually applied directly to the finite element discretization in practice.
In fact, on one hand, we can approximate the discontinuous boundary data by a smooth one and then apply a finite element method to the regularized problem,
on the other hand, it has been observed that for properly defined continuous boundary condition, the discrete system will not see the difference between the discontinuous and the continuous boundary condition \cite{Caizhiqiang2020,DuranGastaldiLombardi_2020}.
In this paper, we expect to explore the capacity of the VPVnet method to solve this problem.

\begin{figure}[!h]
\centering
\subfigure[Speed]{
\label{Fig5.sub.1}
\includegraphics [width=0.30\textwidth]{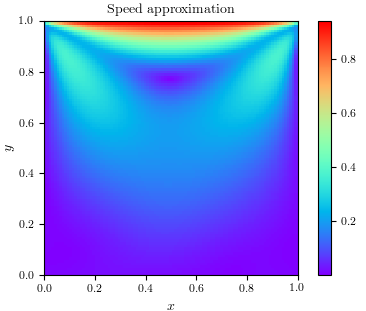}
}
\subfigure[Pressure]{
\label{Fig5.sub.2}
\includegraphics [width=0.30\textwidth]{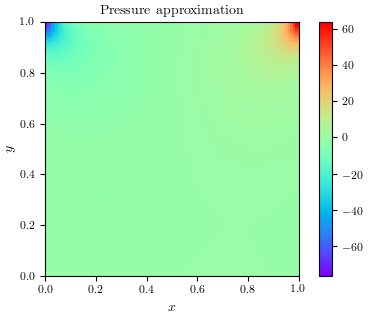}
}
\subfigure[Pressure plot in 3D ]{
\label{Fig5.sub.3}
\includegraphics [width=0.30\textwidth]{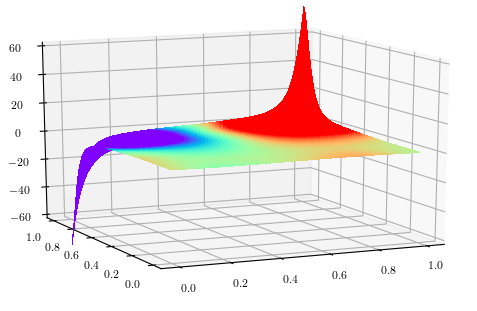}
}\\
\subfigure[Velocity ]{
\label{Fig5.sub.4}
\includegraphics [width=0.28\textwidth]{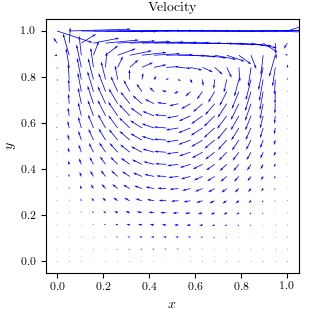}
}
\subfigure[Streamline ]{
\label{Fig5.sub.5}
\includegraphics [width=0.315\textwidth]{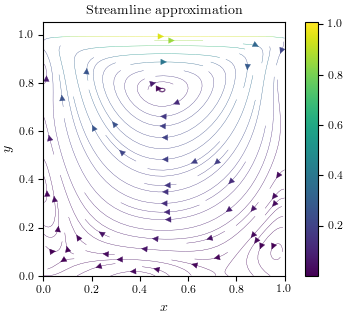}
}

\caption{Numerical results of {lid-driven} cavity test case obtained with VPVnet method using $50\times50$ locally refine quadrature points in $\Omega$.}
\label{fig:testcase 5.2}
\end{figure}
We use the following computation configuration for 2D case {computation}:
the parameters $\alpha=1$ and a neural network with $12 \times 16$ hidden layers and
neurons is employed.
A locally refined partition of size $50\times 50$ which is geometrically refined towards all the corners, i.e. $(0,0)$, $(1,0)$, $(0,1)$ and $(1,1)$, are considered.
The locally refined partition has elements of size varies from $4.0e-3$ to $4.8e-2$, an illustration is shown in Fig \ref{QPs_LDC_2D}.
One point quadrature rules is employed.
We pick the value of $h=1/50$.
As for the boundary condition of $u$, $v$, the partition is in accordance with the partitions interior domain,
i.e. locally refined points on the boundaries of $\Omega$ are used.
The discontinuous boundary condition is applied directly.

The numerical approximations obtained with the VPVnet method of the speed, velocity, pressure and the streamline for the {lid-driven} cavity test case are plotted in Figure \ref{fig:testcase 5.2}.
The exact solution of lid-driven problem is not known for the Stokes' problem.
The shape of streamlines is similar to the result given in \cite{LiRuo_2019}, strong pressure singularities are present at the top two corners, the results look quite reasonable.
Note that, there are some perturbations which can be observed near the domain boundaries, particularly in the figure for the streamline \ref{Fig5.sub.5}.
We refine the partition of the domain by $200\times200$ rectangular element with size from $1.0e-3$ to $8.0e-3$.
The perturbations turns to diminish (see Fig. \ref{fig:testcase 5.3}).

\begin{figure}[!h]
\centering
\subfigure[Speed]{
\label{Fig53.sub.1}
\includegraphics [width=0.30\textwidth]{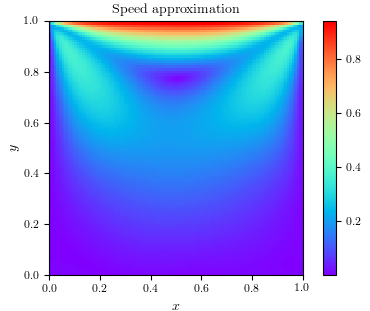}
}
\subfigure[Pressure]{
\label{Fig53.sub.2}
\includegraphics [width=0.30\textwidth]{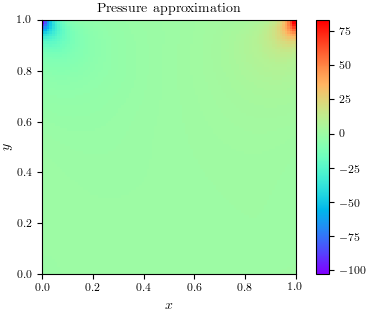}
}
\subfigure[Pressure plot in 3D ]{
\label{Fig53.sub.3}
\includegraphics [width=0.30\textwidth]{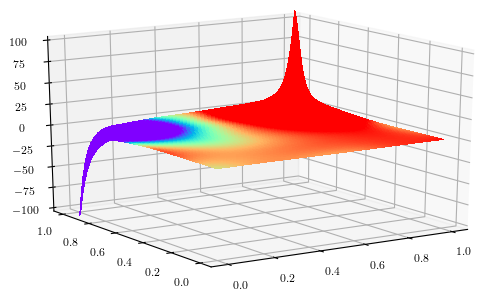}
}\\
\subfigure[Velocity ]{
\label{Fig53.sub.4}
\includegraphics [width=0.265\textwidth]{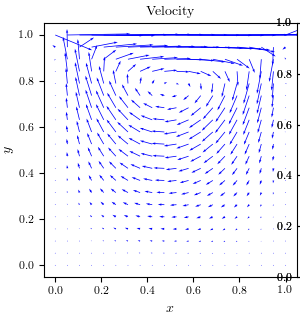}
}
\subfigure[Streamline ]{
\label{Fig53.sub.5}
\includegraphics [width=0.310\textwidth]{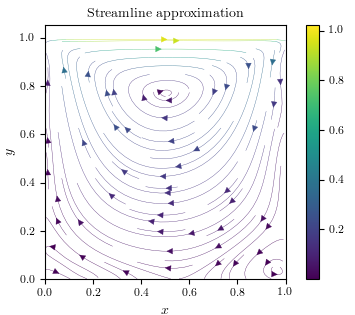}
}

\caption{Numerical results of {lid-driven} cavity test case obtained with VPVnet method using $200\times200$ locally refine quadrature points in $\Omega$.}
\label{fig:testcase 5.3}
\end{figure}
We also consider the 3D {lid-driven} cavity test case in a unit cubic domain.
A tangential velocity $\bm{u}_{\Gamma} =(1, 0, 0)^T $ is imposed at the top surface of the domain, while
$\bm{u} = (0, 0,0)^T$ is imposed on the rest of the boundary.
The parameters $\alpha=1$ and a neural network with $16 \times 16$ hidden layers and
neurons is employed for the computation.
Locally refined partitions of size $10\times 10\times10$ and $20\times 20\times20$ which are geometrically refined towards the domain corners are considered.
The locally refined partition has elements of size varies from $2.0e-2$ to $2.0e-1$ for the $10\times 10\times10$ partition, and $1.0e-2$ to $1.0e-1$ for $20\times 20\times20$ partition, an illustration is shown in Fig. \ref{QPs_LDC_3D}.
Note that for 3D computation, the BFGS algorithm is used to finetune the results obtained with 2000 Adam optimization iterations.
The numerical results are shown in Figures \ref{fig:testcase 5.4} and \ref{fig:testcase 5.5}, where Figure \ref{fig:testcase 5.5} show the top view along the $x,y,z$ axis respectively.
The numerical results indicate that the VPVnet method is efficient and accurate.

\begin{figure}
\centering
\subfigure[velocity with 1000 QPs]{
\label{Fig54.sub.1}
\includegraphics [width=0.45\textwidth]{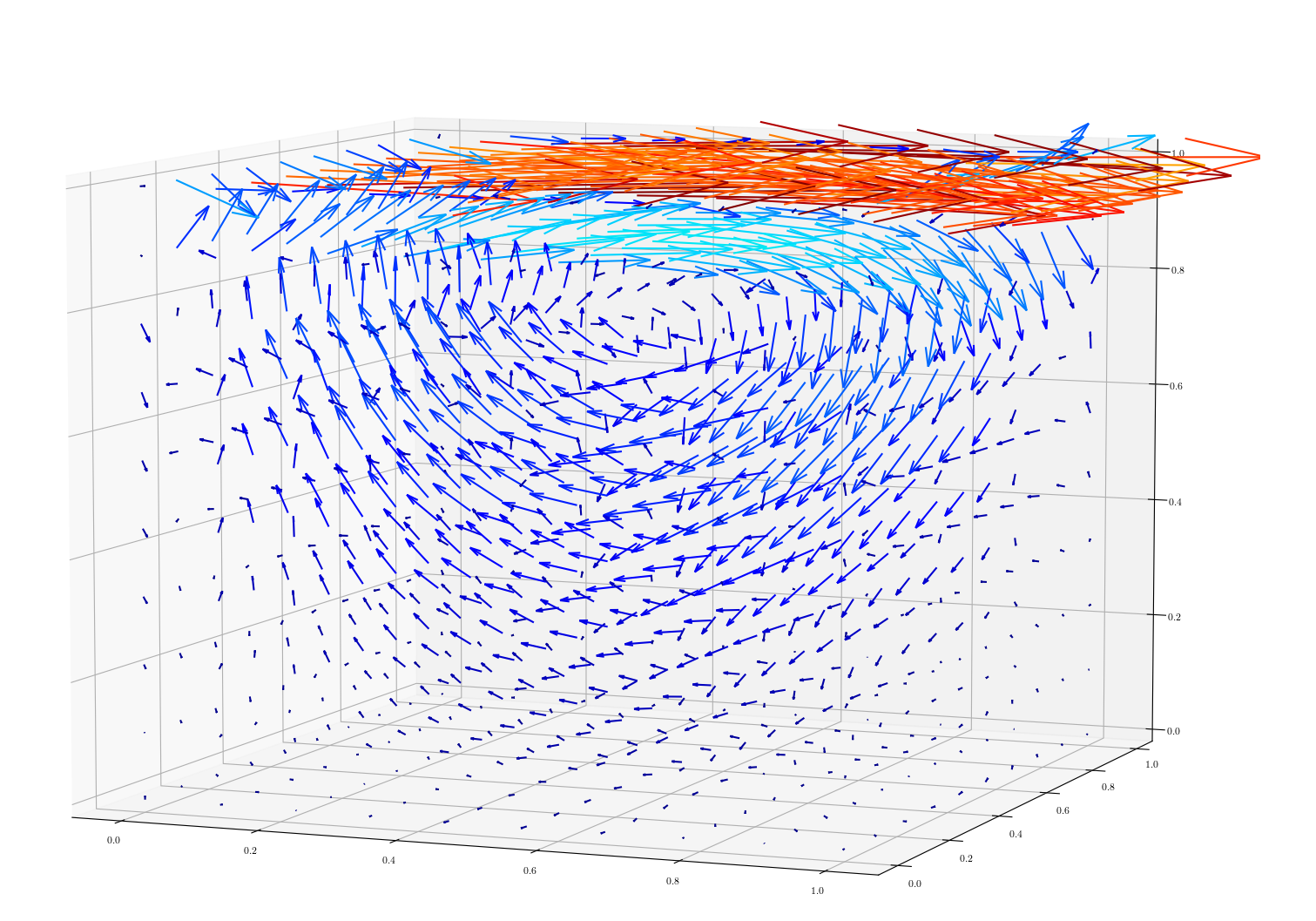}
}
\subfigure[velocity with 8000 QPs]{
\label{Fig54.sub.2}
\includegraphics [width=0.45\textwidth]{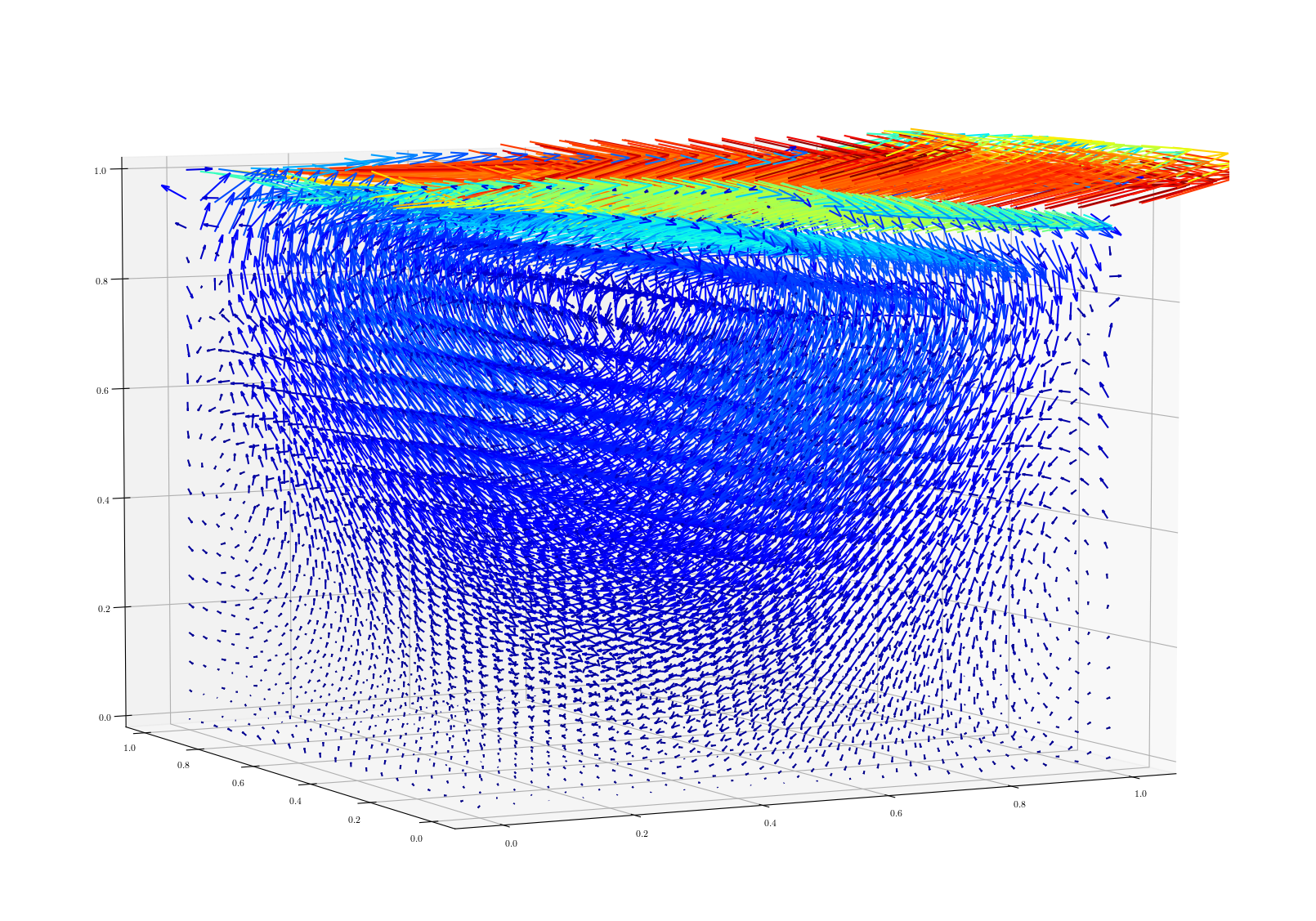}
}
\caption{Numerical results of 3D {lid-driven} cavity test case obtained with VPVnet method using $1000$ and $8000$ locally refine quadrature points in $\Omega$.}
\label{fig:testcase 5.4}
\end{figure}

\begin{figure}
\centering
\subfigure[view from x-axis]{
\label{Fig55.sub.3}
\includegraphics [width=0.32\textwidth]{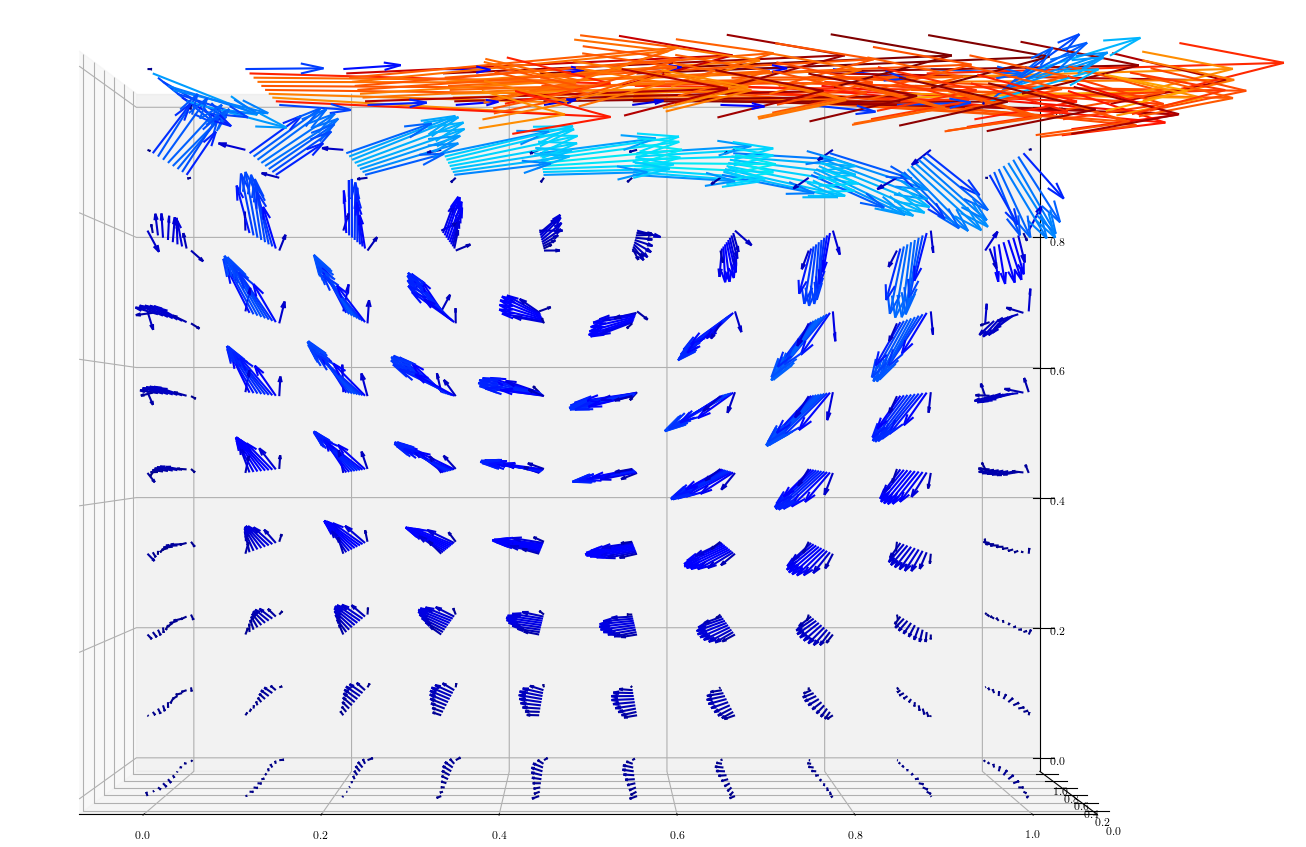}
}
\subfigure[view from y-axis]{
\label{Fig55.sub.1}
\includegraphics [width=0.28\textwidth]{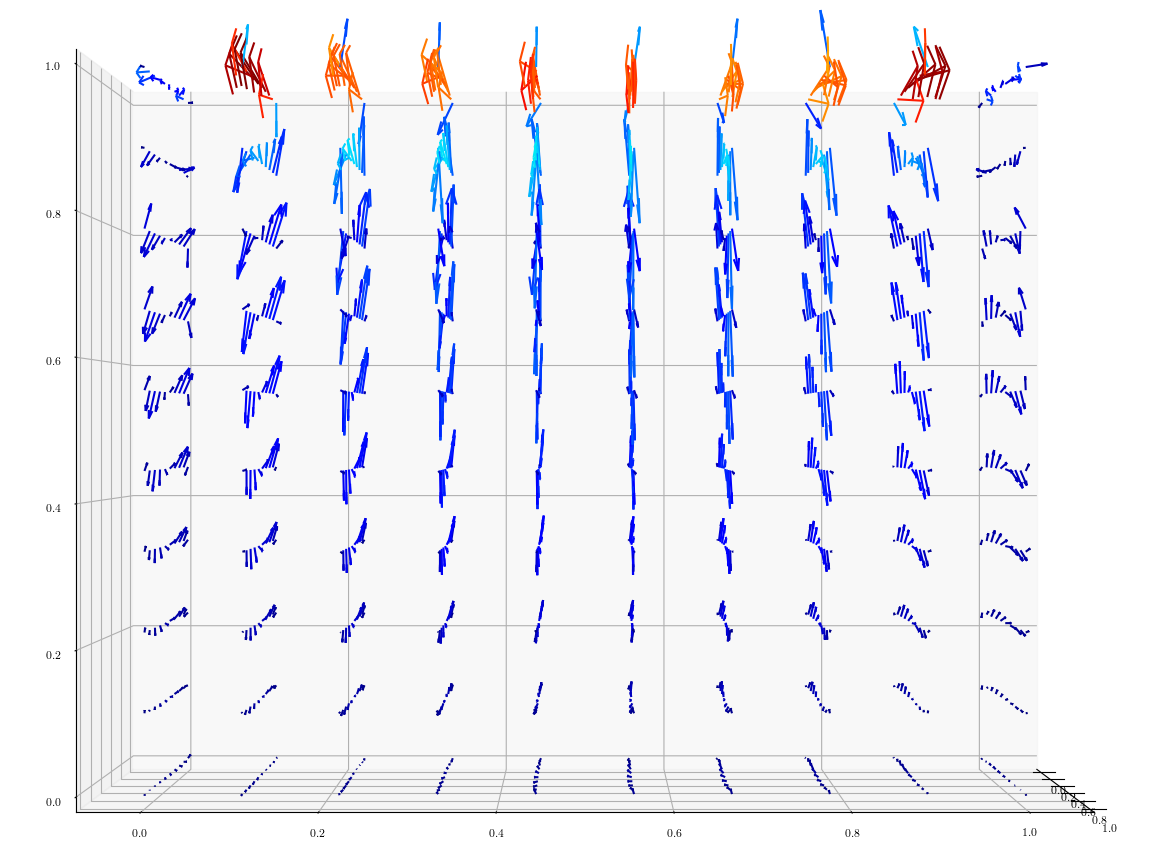}
}
\subfigure[view from z-axis]{
\label{Fig55.sub.2}
\includegraphics [width=0.31\textwidth]{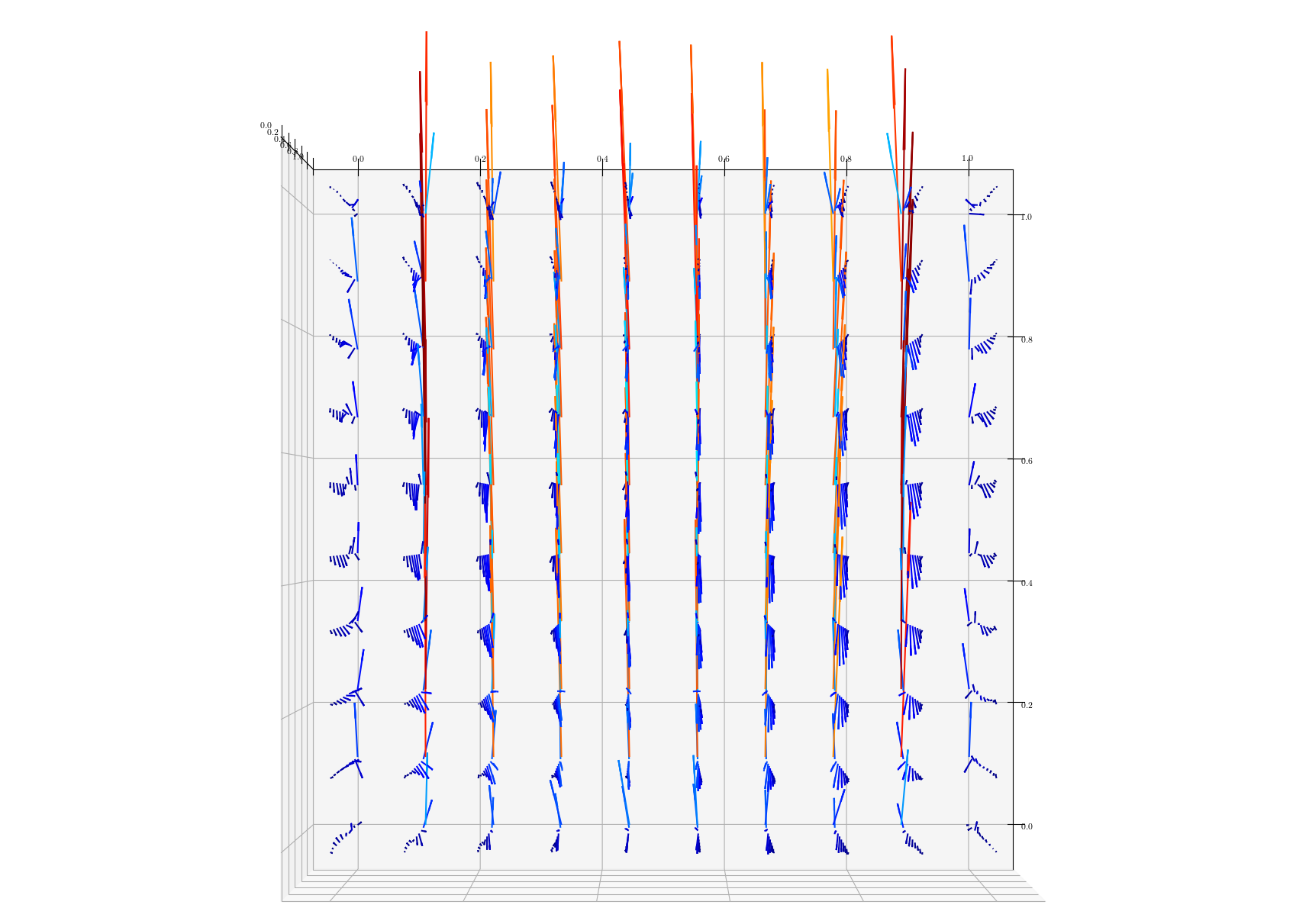}
}
\\
\subfigure[view from x-axis]{
\label{Fig55.sub.6}
\includegraphics [width=0.320\textwidth]{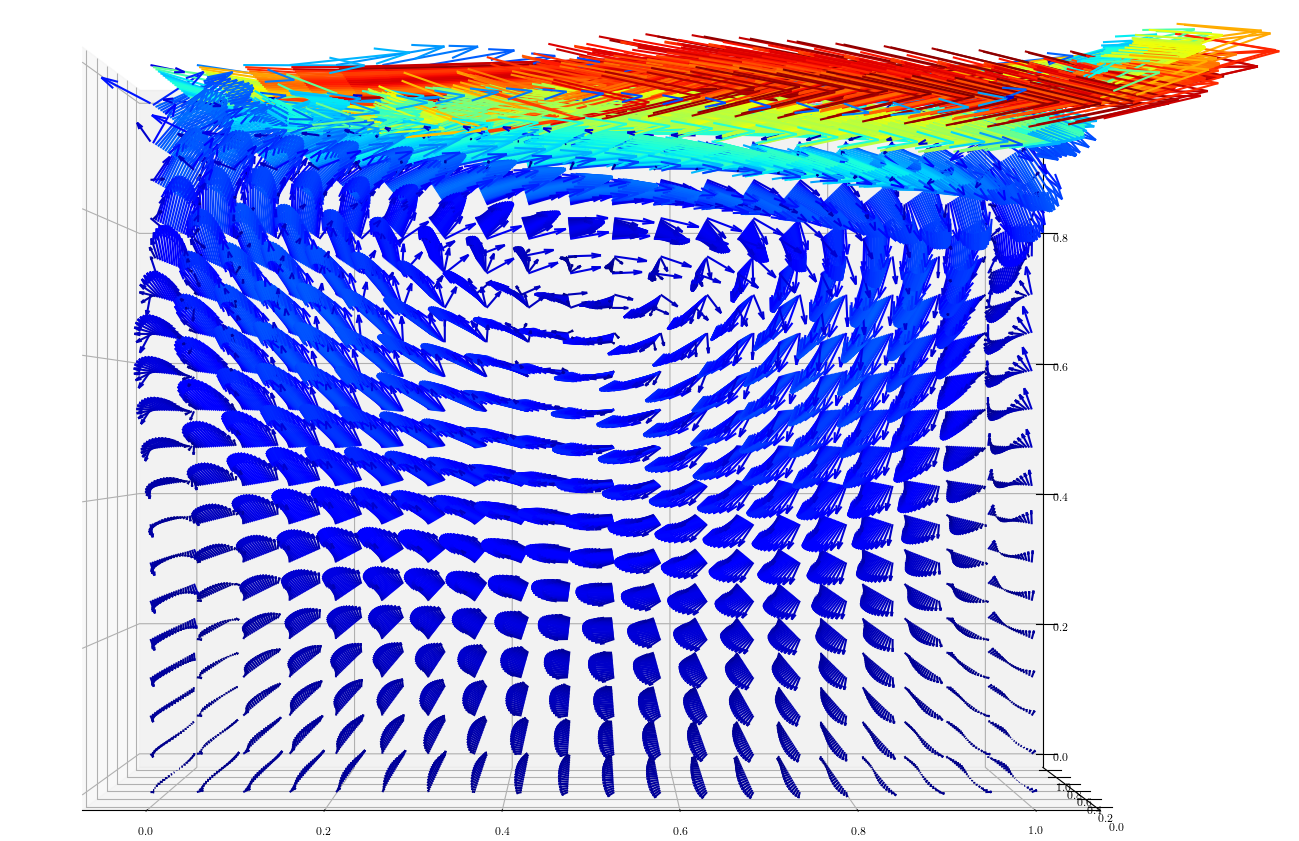}
}
\subfigure[view from y-axis]{
\label{Fig55.sub.4}
\includegraphics [width=0.28\textwidth]{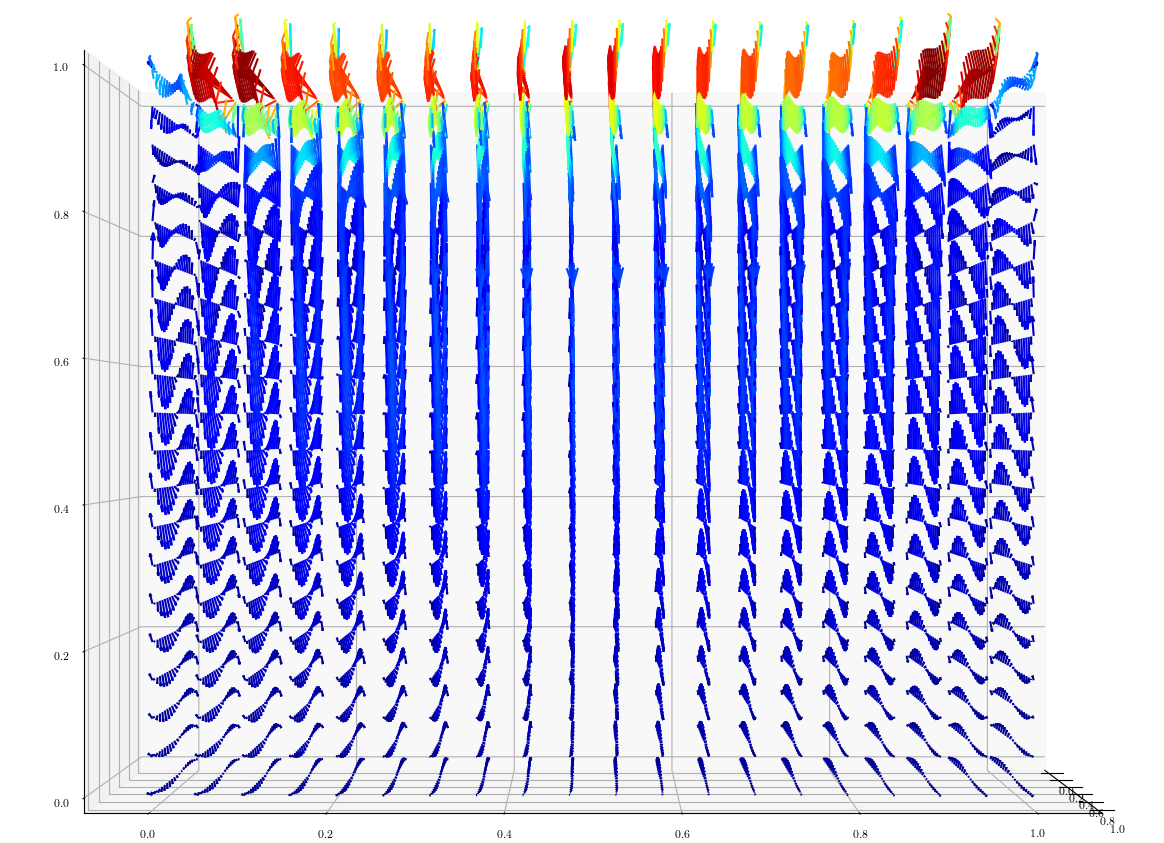}
}
\subfigure[view from z-axis]{
\label{Fig55.sub.5}
\includegraphics [width=0.31\textwidth]{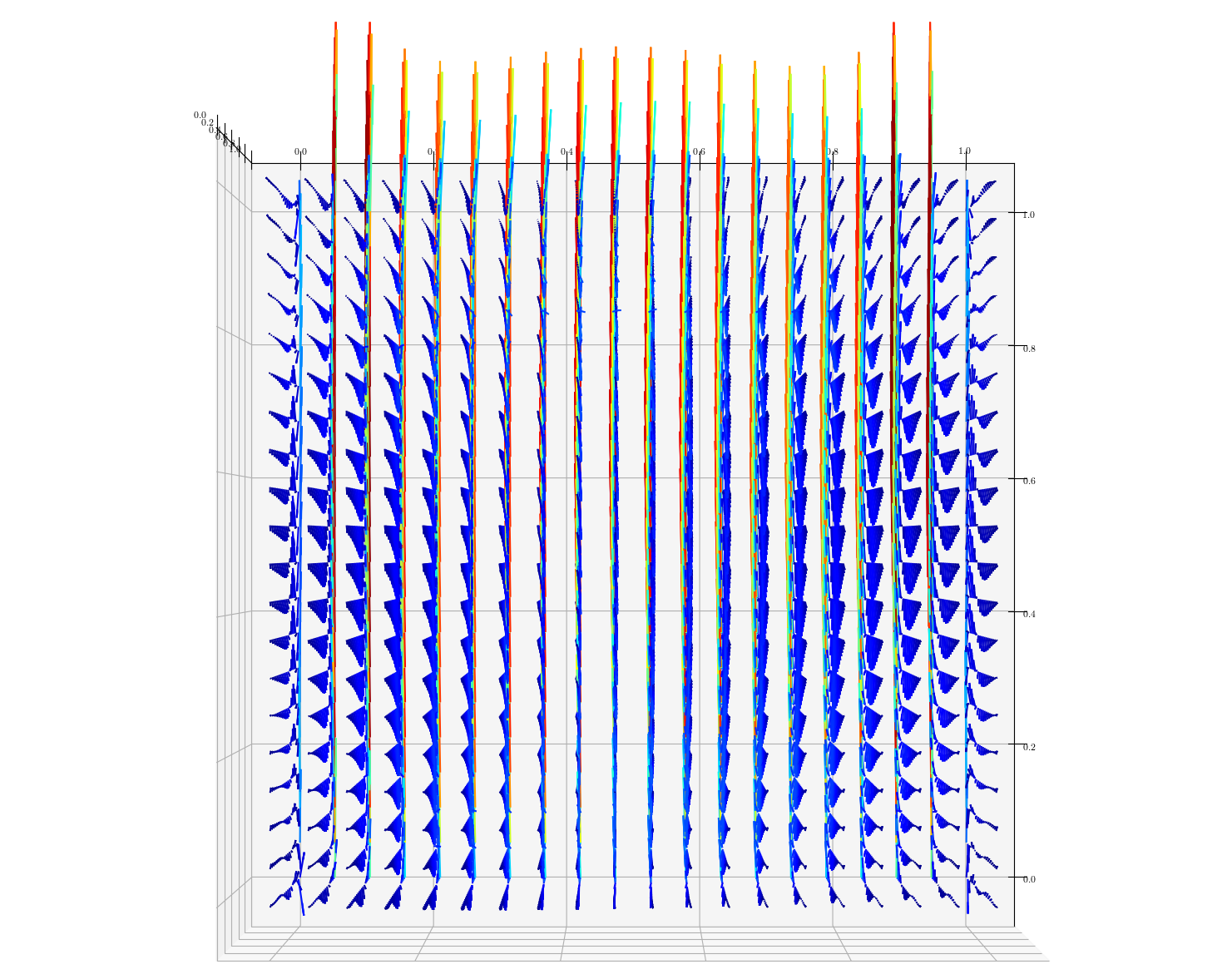}
}
\caption{Numerical results of {lid-driven} cavity test case obtained with VPVnet method using $1000$ and $8000$ locally refine quadrature points in $\Omega$.}
\label{fig:testcase 5.5}
\end{figure}
\section{Discussion and conclusion}\label{SectionDisCon}
A deep neural network method is proposed and verified to approximate the solution of the Stokes' equations.
The method make{s} use of least square functionals based on the {first-order} VPV formulation \eqref{EQ:Velocity-Pressure-Vorticity_formulation} of the Stokes' equations as loss functions.
It has less regularity requirements on the solutions compared with other methods such as the methods based on the original form of the PDEs \cite{DGM_2020,raissi2019physics}.
{Convergence and error estimates have been established for this method.}
We observed that smooth solutions of the Stokes' equations can be well approximated by the VPVnet method using Resnet neural networks
with a rather small number of neurons and hidden layers.
Within a certain limit, a better precision can be obtained by using more hidden layers, more neurons and more quadrature points.
Either smooth activation function, such as $Sin(x)$, $Tanh$, $Sigmoid$, etc can be employed and all of them can provide satisfying approximations.
The method is divergence-free and pressure-robust.

We focus on the ability of the method for the approximation of Stokes' equations with non-smooth/singular solutions, i.e. $u\notin H^2$ and $p\notin H^1$.
For the velocity variable $\bm{u}\in H^s$, $2>s>1$ ($\in C^0$), the VPVnet methods can approximate it very well,
which is in consistent with our theoretical analysis.
As for the pressure variable $p\in H^{s-1}$, $2>s>1$ ($\notin C^0$),
the approximation turn to be more difficult.
Refinement can help to improve the precision, while more efforts is needed to reach a precision satisfying.
The future work will be aimed at further investigation on this aspect.
{In the meantime, a variant of the proposed VPVnet method is under investigation}, namely the {VVGPnet} method
which is based on
the first-order system of velocity/velocity gradient/pressure (VVGP) formulation \cite{Liukuo_2012} of the Stokes' equations.
The VVGP system also provides a $H^1$ least square functional and can be approximated similarly as the VPVnet method.
The VVGPnet method is quite suitable for the approximation of the Stokes interface problems \cite{WangKhoo_2013}, which usually has jumps of the gradient of the velocity.
The method will be detailed in a forthcoming paper dedicated for the interface problems.

The method presented here has already been extended to the incompressible Navier-Stokes Equations and will be discussed in {a forthcoming} paper.

\section*{Acknowledgments}
The authors would like to thank Hailong Sheng for helpful discussions. 
The research of Liu was partially supported by China National Natural Science Foundation (No. 12001306), Guangdong Provincial Natural Science Foundation (No. 2017A030310285).
The research of Yang (corresponding author) was funded in part by Beijing Academy of Artificial Intelligence.


\end{document}